\documentclass{article}

\usepackage{amsmath,amssymb}
\usepackage{amsthm}
\usepackage{amsrefs}
\usepackage{graphicx}
\usepackage{hyperref}
\usepackage{xcolor}
\usepackage{fourier}
\usepackage{geometry}
\definecolor{antiquebrass}{rgb}{0.8, 0.58, 0.46}
\usepackage{natbib}
\usepackage{authblk}

\newcommand{\bs}{\boldsymbol}
\newtheorem{theorem}{Theorem}

\newtheorem{lemma}{Lemma}
\newcommand{\bl}{\big\langle}
\newcommand{\br}{\big\rangle}

\def\cal{\mathcal}

\def\wh{\widehat}
\def\wt{\widetilde}
\def\blue{\color{black}}
\def\red{\color{black}}
\def\brown{\color{black}}
\usepackage{ulem}

\title{Coarse-graining Langevin dynamics using   reduced-order techniques}
\author[1]{Lina Ma}
\affil[1]{Department of Mathematics, Trinity College, Hartford, CT 06106, USA.}%
\author[2]{ Xiantao Li}
\affil[2]{Department of Mathematics, the Pennsylvania State University, University Park, PA 16802-6400, USA.}%
\author[3]{Chun Liu}
\affil[3]{Department of Applied Mathematics, Illinois Institute of Technology, Chicago, IL 60616, USA}


\begin{document}
\maketitle
\begin{abstract}
This paper considers the reduction of the Langevin equation arising from bio-molecular models. {\brown To facilitate the construction and implementation of the reduced models,}  the problem is formulated as a {reduced-order modeling} problem. {\brown The reduced models can then be directly obtained from a Galerkin projection to appropriately defined Krylov subspaces}.  The equivalence to { a} moment-matching procedure, {\brown previously implemented in \cite{ma2016derivation},}   is proved.  A particular emphasis is placed on 
the reduction of the stochastic noise, {\brown which is absent in many order-reduction problems}. In particular, for order less than six we can show the reduced model obtained from the subspace projection automatically satisfies the fluctuation-dissipation theorem.
Details for the implementations, including a {\brown bi-orthogonalization procedure} and the minimization of the number of matrix multiplications, will be discussed as well.
\end{abstract}
\maketitle

\section{Introduction}

Langevin dynamics models arise from a wide variety of problems, especially where a mechanical system is subject to random forces that can be modeled by white noise, e.g.,  {\brown as in} \cite{Schlick2002}. 
A practical issue arises when the dimension of system is large, in which the computational cost can be overwhelming. For example, in bio-molecular models, the 
degrees of freedom are associated with the position and momentum of the constituting atoms, {\brown and the large dimensionality makes it difficult to probe large-scale biological processes over an extended period of time.}   In this case, it is of great  interest to develop reduced models, which in bio-molecular modeling, is known as coarse-graining \cite{Leach01, espanol2004statistical, riniker2012developing, noid2013perspective,voth2008coarse}. 

There are multiple benefits from such an approach. For example, reduced models can capture directly the dynamics of certain quantities of interest. Secondly, with the reduction of the dimension, the computational cost can be reduced dramatically. { In addition}, the quantities of interest often correspond to slow variables. By eliminating fast variables, the time step can also be increased considerably. This allows one to access longer time scales \cite{riniker2012developing}. 

There has been {\brown tremendous}  recent progress in the development of coarse-grained models \cite{curtarolo_dynamics_2002, IzVo06, kauzlaric2011three, kauzlaric2011bottom, lange2006collective, Li2009c,oliva2000generalized,stepanova_dynamics_2007,LiXian2014,li2015incorporation}. Most effort, however, is thermodynamics based. Namely, one aims to construct the free energy associated with the reduced variables, which then yields the driving force for the reduced dynamics, known as the potential of mean forces (PMF) \cite{marrink2007martini,monticelli2008martini}. As pointed out in \cite{marrink2007martini,monticelli2008martini}, the damping mechanics, which also plays an important role in the reduced dynamics, is not  part of the construction. 

In this work, we are interested in an {\it equation-based} derivation, where the reduced model can be derived directly from the Langevin dynamics.  {\red Deriving reduced models from a  stochastic dynamical system has been a subject of extensive studies, the most well known of which is the homogenization approach \cite{pavliotis2008multiscale}. Another important approach is to employ a coordinate transformation using normal forms to separate out the degrees of freedom that are less relevant \cite{roberts2008normal}.  More recently, Legoll and Lelievre proposed to use conditional expectations to derived reduced models \cite{legoll2010effective}.  Overall, these methods  require either significant scale separation assumption, or simple functions forms in the stochastic differential equations, which for bi-molecular models, does not apply.  For example, the force-field for biomolecular models typically involves complicated function forms. } 

{\red Meanwhile,  in the field  of molecular modeling there are also many methods that were proposed to coarse-grain a molecular dynamics model. Most of these methods are derived from a Hamiltonian system of ODEs \cite{li2014coarse,kauzlaric2011bottom,IzVo06,LiE07,fricks2009time,Darve_PNAS_2009,curtarolo_dynamics_2002,ChSt05}, either motivated by or directly  obtained, from the Mori-Zwanzig projection formalism \cite{Mori1965b,Zwanzig73}.  Strictly speaking, such a procedure will break down for stochastic models, due to the absence of the semi-group evolution operator.}
 For Langevin dynamics, one empirical coarse-graining approach is the partition method  \cite{sweet2008normal}, in which the variables are projected into appropriate subspaces.  However, the approach proposed in  \cite{sweet2008normal} does not reduced the number of variables. Rather, it is a numerical integration algorithm.   The main reduction comes from filtering out high frequency modes in the numerical algorithm. In our previous work \cite{ma2016derivation}, we have furthered this approach, by eliminating the fast-variables. This gives rise to a generalized Langevin equation (GLE) for the reduced variables. In principle, the GLE, under proper assumptions, is an exact model. %
After this reduction of the spatial dimensions, a temporal reduction was introduced to represent the memory term with a small number of auxiliary variable. Known as Markovian embedding, this procedure approximates the GLEs by using an extended system of stochastic differential equations (SDEs) with white noise.  The main idea is using a rational approximation for the Laplace transform, and the coefficients are determined based on a Hermite interpolation. The important advantage is that the approximation can be written as an extended system of SDE with no memory. 

{\brown A  well known issue in Pad\`{e} type of approximations}
is that when more conditions are incorporated, the resulting models tend to be ill-posed. In particular, the coefficient matrices are usually ill-conditioned, making it impractical.  {\brown Therefore, an important} focus of this paper is on  re-formulating the coarse-graining procedure into a {reduced-order problem}, which has been widely studied \cite{bai2002krylov,feldmann1995reduced}. 
In particular, we observe a feedback loop between the coarse-grain variables and the additional degrees of freedom, i.e., the fast variables. More specifically, the slow variables impose a mechanical force on the fast dynamics, and in turn, such influences will be propagated back as a force on the slow variables. As a result, the elimination of fast variables can be viewed as an order reduction problem, in that it is a large-dimensional dynamical system with low-dimensional input and low-dimensional output.  {\brown We will show that with an appropriate reformulation of the fast dynamics, } the transfer function from the order-reduction problem corresponds precisely to the memory kernel in the GLE. For such problems, one robust numerical method is the Krylov subspace projection \cite{bai2005reduced,bai2002krylov}, which uses a Galerkin projection onto Krylov subspaces. The subspaces can be orthogonalized using the Lanczos algorithm \cite{feldmann1995reduced,loher2006reliable}.  {\brown As a result, instead of mannually constructing the auxiliary system on a case-by-case basis as in the moment matching approach \cite{ma2016derivation}, we can automate the procedure numerically.
More importantly, the bi-orthogonalization alleviate the problem of having ill-conditioned matrices.}
 
For the current problem, the presence of the noise presents another critical issue. Namely, the random noise in the GLE must satisfy the second fluctuation-dissipation theorem (FDT) \cite{Kubo66}, {\brown a necessary condition for the solution of the GLE to be stationary and to have the correct variance.} In the Galerkin projection method, both the noise and the kernel function are being approximated. {\brown In general,  they do not satisfy the second FDT, unless the subspaces are properly selected.  We will provide two conditions that ensure such consistency, and we will show Krylov subspaces that fullfill these conditions. }

This paper is organized as follows. Section 2  describes the derivation the GLE system.   The classical approach of  approximating the  Laplace transform of the  memory kernel function $\theta(t)$ with a rational function will be presented. 
 Section 3 presents a formulation using the Galerkin  projection to general subspaces. Criteria will be provided in order to maintain the FDT in the reduced system. In Section 4, we introduce appropriate  Krylov subspaces to fulfill the criteria.  The resulting system will also be compared to a moment-matching procedure and the equivalence is proved in this section. Section 5 addresses two important issues in the numerical implementation.  Numerical examples { are} shown in Section 6.  

\section{Mathematical Derivation}
\subsection {The Reduction of the Full Langevin Dynamics Model}
We start with the full Langevin dynamics model with $N$ atoms. After proper mass scaling \cite{Schlick2002},  the system can be expressed as follows,
\begin{equation}\label{scaleLGE}
\left\{
 \begin{aligned}
   \dot{x}(t)= & v(t),\\
     \dot{v}(t)=& F(x) - \Gamma v(t) + f(t),   
 \end{aligned}
 \right.
\end{equation}
where $x=(x_1,x_2,\dots,x_N)$ denotes the displacement of all the atoms,  $F(x)$ is the force derived from an empirical potentials $V(x)$ with $F=-\nabla V$, $\Gamma$ denotes the damping coefficient for the friction term with dimension $\mathbb{R}^{3N\times 3N}$, and $f(t)$ is a stochastic force, usually modeled by a Gaussian white noise, which satisfies the fluctuation-dissipation theorem (FDT),
\begin{equation}
\bl f(t), f(t')^\intercal \br=2k_B T\Gamma \delta(t-t').
\end{equation}
{\brown For example, the random force can be written in the conventional form: $df(t)= \sigma dW(t)$ with $W(t)$ being the standard Brownian motion, and $\sigma\sigma^\intercal= 2k_B T\Gamma.$}
Here, $k_B$ is the Boltzmann constant, and $T$ is the temperature of the system. This FDT is crucial to ensure that the system reaches the correct equilibrium state  \cite{Kubo66} . 


Implementing the full Langevin dynamics model can be very expensive, due to the large number of atoms involved in the entire system. 
Here we briefly go over a reduction procedure. More details can be found in   \cite{ma2016derivation}.


The first step in the reduction procedure is to identify slow variables, which at the same time, are sufficient to describe the overall dynamics. In principle, these variables can be selected 
by transforming the system into normal forms  \cite{roberts2008normal}.  For bio-molecules, a more intuitive and more efficient approach is based on the residues,  the building blocks of proteins, by choosing the center of mass of each amino acid.  Mathematically, this can be expressed as a small number of basis functions \cite{TaGaMaSa00}, which span a subspace, denoted here by $Y$, with its orthogonal complement denoted by  $Y^\perp$.      $Y$ has dimension $m$ and $Y^\perp$ has dimension ${3N-m}$: $m\ll 3N.$  We denote the basis vectors by \(\{\phi_i\}\) and  \(\{\psi_i\}\), respectively,  as follows,
$$
Y={\rm span} \{\phi_1,\phi_2,\dots,\phi_m\}, \quad
Y^{\perp} = {\rm span} \{\psi_1,\psi_2,\dots,\psi_{3N-m}\}.
$$
Taking these basis vectors as columns and forming matrices $\Phi$ and $\Psi$,  one can  decompose the solution $x$ in the following form,
\begin{equation}\label{eq: qxi}
 x(t)=\Phi q(t) + \Psi \xi(t),
\end{equation}
where $q \in \mathbb{R}^m$ and $ \xi \in \mathbb{R}^{3N-m} $ are nodal values associated with the basis vectors. Similarly, 
$$
v(t)=\Phi p(t) + \Psi \eta(t).
$$ 
Meanwhile, a linearization of the force $F\approx -Ax$ is considered, e.g., by principal component analysis (PCA)  \cite{stepanova_dynamics_2007}:
$$
\bl x(t), x(t)^\intercal  \br = k_BT A^{-1},
$$
which ensures that the covariance of the displacement is correct.

Now define the following projected matrices and vectors, 
$$ A_{11} = \Phi^\intercal  A \Phi, \quad A_{12} = \Phi^\intercal  A \Psi, \quad \Gamma_{11} = \Phi^\intercal \Gamma\Phi, \quad \Gamma_{12} = \Phi^\intercal  \Gamma \Psi,   \quad f_1=\Phi^\intercal  f(t),
$$
$$
 A_{21}=\Psi^\intercal  A \Phi, \quad A_{22}=\Psi^\intercal A \Psi, \quad \Gamma_{21} = \Psi^\intercal \Gamma \Phi, \quad \Gamma_{22}=\Psi^\intercal \Gamma\Psi, { \quad f_2=\Psi^\intercal  f(t)}.
$$

By using this partition of variables, the original Langevin dynamics can be written in terms of the following first order stochastic differential equations (SDEs),
\begin{align}
 \left\{
 \begin{aligned}\label{eq: lg'}
   \dot{q}(t)= &  p(t), \\
   \dot{p}(t)=&\Phi^\intercal  F(\Phi q) - A_{12} \xi(t)  - \Gamma_{11} p(t) - \Gamma_{12}  \eta (t)+{f_1(t)},
   \end{aligned}
    \right.\\
   \left\{
   \begin{aligned}\label{eq: lg'2}
   \dot{\xi}(t)= &  \eta(t), \\   
   \dot{\eta}(t)= & -A_{21} q(t)  - A_{22}\xi(t) - \Gamma_{21} p(t) - \Gamma_{22}  \eta(t) + { f_2(t)}.
   \end{aligned}
    \right.
\end{align}
{\red The linearization of the high-frequency modes has been based on numerous observations, e.g., \cite{hayward2002temperature}.
Essentially, we assume that the low frequency can be well captured by the basis functions in $\Phi$, and the high frequency is nearly Gaussian. 
For example, in the rotation-translation block (RTB) approach, each residue is allowed to move as a rigid body. There is overwhelming evidence that the low-frequency normal modes are well represented by the subspace spanned by such basis functions \cite{TaGaMaSa00}.  
}

Here $(q, p)$ are the reduced/coarse-grained variables. Notice that the interactions involving the fast variables $\xi$ have been 
linearized. By eliminating  $(\xi, \eta)$, we have derived a low-dimensional reduced model {\cite{ma2016derivation}},
\begin{equation}\label{eq: GLE}
\left\{
\begin{aligned}
 \dot{q}(t)= &  p(t), \\
   \dot{p}(t)=& F_\text{eff}(q) - \Gamma_{11} p (t)- \int_0^t  \theta(t-\tau) p(\tau)d\tau + \wt {f}(t).
\end{aligned}
\right.
\end{equation}
The effective force for the reduced system is
\begin{equation}\label{eq: f-eff}
 F_\text{eff}(q)= \Phi^\intercal  F(\Phi q) -A_{12}A_{22}^{-1}A_{21}q.
\end{equation}
{
Compared to system (\ref{eq: lg'}), the force $F_{\rm eff}$ has an extra term $ -A_{12}A_{22}^{-1}A_{21}q$ from the derivation.
}
$\theta(t)$ is the {\it memory kernel function}, which is expressed in terms of a matrix exponential, 

\begin{equation}\label{eq: theta}
 \theta(t)= 
 \big[ A_{12}, \;\Gamma_{12}\big] e^{Dt} 
 \left[
\begin{array}{cc}
 A_{22}^{-1} &  0\\
 0 & -I
 \end{array}
 \right]
 \left[
\begin{array}{c}
  A_{21}  \\
  \Gamma_{21}
 \end{array}
\right],
\end{equation}
{ where the matrix $D \in \mathbb{R}^{(6N-2m)\times (6N-2m)}$  is defined as,}
\begin{equation}\label{eq: Gmat}
D=
\left[
\begin{array}{cc}
 0 & I\\
 -A_{22} &\;\;\;\; -\Gamma_{22} 
 \end{array}
 \right] .
\end{equation}

It has also been shown {\red in  \cite{ma2016derivation} 
\begin{equation}\label{eq: f-ran}
\wt{f} = { f_1(t)}
  - \big[ A_{12}, \;\Gamma_{12}\big]  
 \int_0^\intercal  e^{D(t-s)} \left[
\begin{array}{c}
  0 \\
{ f_2(s)}
 \end{array}
 \right] ds
 - \big[ A_{12}, \;\Gamma_{12}\big]
e^{D t} 
 \left[
\begin{array}{c}
 {\xi}(0)+A_{22}^{-1}A_{21}q(0)\\
\eta(0)
 \end{array}
 \right]
. 
\end{equation}
This random force } is a stationary Gaussian random process with mean zero, satisfying the second fluctuation-dissipation theorem:
\begin{equation}\label{eq: fdt3}
 \bl \wt {f}(t) \wt {f}(t')^\intercal \br = 2k_B T { \Gamma_{11}} \delta(t-t') + k_B T \theta(t-t').
\end{equation}

\bigskip
\bigskip
{
Equation \eqref{eq: GLE} is known as the generalized Langevin equation (GLE). Currently there are primarily  three existing methods to solve the GLE numerically.
The first approach is to directly approximate the memory term, either by using quadrature formula, or by approximating the kernel function with a sum of exponentials. Known as the Prony sum, the later approach replaces the memory integral by additional variables that can be updated using certain recurrence formulas or by solving an ODEs system \cite{arnold2003,Jiang2004}.  The random noise can be approximated by introducing noises in those ODEs \cite{baczewski2013numerical}. However, the approximation of the sum of exponentials requires the values of the kernel function \eqref{eq: theta}, which is difficult to compute due to the large dimensionality of the matrix $D$ in the matrix exponential. The second approach is to eliminate the memory effect by approximating the kernel function with a delta function in time \cite{hijon2006markovian,kauzlaric2012markovian}. This approximation can be quite effective when the memory effect is not strong. But in general, the accuracy is quite limited.  The third approach is to approximate the memory effect by introducing auxiliary variables. This has been motivated by the Mori's continued-fraction approach \cite{Mori1965b},  and has been pursued by many groups \cite{LiXian2014,li2015incorporation,Darve_PNAS_2009,li2014coarse,ma2016derivation}.
}


 For example, {\brown in \cite{ma2016derivation}}, the  first order approximation leads to an extended dynamics with auxiliary variable $z$,
\begin{equation}\label{eq: 1st-order}
\left\{
\begin{aligned}
 \dot{q}(t)= & p(t), \\
   \dot{p}(t)=&F_\text{eff}(q)- \Gamma_{11} p(t) - z(t) + { f_1}(t),\\
   \dot{z}(t)=&  Bz(t) +  C{p}(t) + { \zeta(t)}.
\end{aligned}
\right.
\end{equation}
The coefficients $ B$ and $ C$ can be found by using a  `moment matching' procedure, and we will elaborate on such procedures in section \ref{sec: momentmatching}. At the same time, methods have been established to sample the additive noise $\zeta(t)$  to ensure the FDT \eqref{eq: fdt3}. 

In theory, it is possible to advance to high order approximations using the above methods, e.g., a third order method  \cite{ma2016derivation}. However, in practice, the matrices generated from the moment matching procedure tend to become ill-conditioned as the order of approximation increases. Moreover, the covariance of the noise and the covariance of the auxiliary variable $z$  need to be constructed specifically  for  each  order of approximation to ensure the FDT \eqref{eq: fdt3}, which is nontrivial. Therefore, it is  important to develop an alternative method to improve the robustness and automate the procedure. Inspired by order reduction methods for large-scale dynamical system, we will formulate the current problem as an order reduction problem with stochastic noise. The key  is to identify the low-dimension input and low-dimension output.

\section{Model Reduction for the  Stochastic Model}

\subsection{A Reformulation of the Orthogonal Dynamics}
We will first introduce vector and matrix notations to rewrite the system \eqref{eq: lg'2} in a more compact form.
Let $ y=(\xi, \eta)^\intercal$  represents   the partitioned variables, and $ u(t) = (q,p)^\intercal$  represents the coarse grained variables. {\brown System (\ref{eq: lg'2})  can be rewritten as  }  
the following  { SDEs}:
\begin{equation}\label{eq: orth0}
   \dot{ y}(t)= D  y (t)+  {\wh{R}}  u(t) +{g}(t), \quad  y(0) \sim \cal{N}(0,k_B T Q).
\end{equation}
with 
\begin{equation} 
{\wh{R}}=
 \left[
 \begin{array}{cc}
   0 & 0 \\
  -A_{21} &  -\Gamma_{21}
  \end{array}\right],
  \quad g=\left[
 \begin{array}{c}
   0\\
   f_2(t)
  \end{array}\right]. 
\end{equation}

The matrix $Q$ determines the initial covariance of $ y$, given by,
\begin{equation}
Q=
\left[
\begin{array}{cc}
 A_{22}^{-1} &  \;\;\;\;0\\
 0 & I
 \end{array}
 \right].
\end{equation}
Further we let $\Sigma$ be the variance of the {\brown Gaussian} noise $g(t)$. It follows the Lyapunov equation, to ensure the stationarity of the solution,
  \begin{equation}\label{eq: Lya}
  \Sigma = -k_BT \big( DQ + Q D^\intercal \big).
  \end{equation}
  It can be directly verified that,
 \begin{equation}
\Sigma=\left[
\begin{array}{cc}
 0 &  0\\
 0& 2 k_B T\Gamma_{2,2}
 \end{array}
 \right].
\end{equation}
{\brown At the same time, we define}
\begin{align}
 \quad L=\big[ A_{12}, \;\Gamma_{12}\big], \quad R=[A_{22}^{-1}A_{21}, -\Gamma_{21}]^\intercal.
\end{align}
{\brown Now the equation} (\ref{eq: lg'}) can be written as
\begin{align}
 \left\{
 \begin{aligned}\label{eq: cgGLE}
   \dot{q}(t)= &  p(t), \\
   \dot{p}(t)=&\Phi^\intercal  F(\Phi q)   - \Gamma_{11} p(t) - L\bs y+{f_1(t)}.
   \end{aligned}
    \right.
\end{align}

{\brown The corresponding memory kernel in \eqref{eq: GLE} is given by,}
\begin{equation}\label{eq: mem}
 \theta(t)=L e^{Dt} R.
\end{equation}

{\brown It is at this point that we recognize the similarity to an order reduction problem: The large-dimensional dynamics \eqref{eq: orth0} contains an input variable $u(t)$, which is low-dimensional. Moreover, of direct importance to the coarse-grained dynamics \eqref{eq: cgGLE} is $Ly$, which again is low-dimensional. Also observed, however, is that the dimensions of $L$ and $\wh{R}$ are different. Fortunately, we can reformulate the problem into the following equivalent dynamics (\ref{eq: GLEn3}), where the input and output dimensions are the same.
\begin{align}\label{eq: GLEn3}
\left\{
\begin{aligned}
\dot{q}(t) &=p(t)\\
\dot{p}(t) &=F_{\rm eff}(q)-\Gamma_{11} p(t)-L  y + f_1(t),\\
\dot{ y}(t) &= D  y(t) + Rp(t) + g(t),\quad    y(0) \sim \cal{N}(0,k_B T Q).
\end{aligned}
\right.
 \end{align}

 }




\begin{theorem}\label{thm1}
Consider the following dynamics:
\begin{equation}\label{eq: orth1}
\dot { y}_1(t)= D y_1(t) + Rp(t) +  {{g}}(t), \quad  y_1(0) \sim {\cal N}(0, k_B TQ).
\end{equation}
{\red With a substitution into the first two equations in \eqref{eq: GLEn3} (in which $y$ is replaced by $y_1$), one obtains a GLE that is equivalent to (\ref{eq: GLE}).
}


\end{theorem}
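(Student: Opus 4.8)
The plan is to eliminate $y_1$ from the extended system by solving the linear stochastic equation \eqref{eq: orth1} explicitly and substituting the result into the $\dot p$ equation of \eqref{eq: GLEn3}. Since \eqref{eq: orth1} is linear in $y_1$ with constant drift $D$, Duhamel's formula (variation of constants) gives
\[
 y_1(t) = e^{Dt} y_1(0) + \int_0^t e^{D(t-s)} R\, p(s)\, ds + \int_0^t e^{D(t-s)} g(s)\, ds .
\]
Applying $L$ and inserting $L y_1$ into $\dot p = F_{\rm eff}(q) - \Gamma_{11} p - L y_1 + f_1$ splits the coupling into three pieces: a convolution in $p$, a term depending on the initial data $y_1(0)$, and a stochastic integral of $g$. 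I would then identify the first piece as the memory term and the remaining two as the random force, and finally check that both agree with \eqref{eq: GLE} and \eqref{eq: fdt3}.

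For the \emph{deterministic} part the convolution is $\int_0^t L e^{D(t-s)} R\, p(s)\, ds = \int_0^t \theta(t-s) p(s)\, ds$ directly from $\theta(t)=L e^{Dt} R$ in \eqref{eq: mem}; together with the $-\Gamma_{11}p$ and $F_{\rm eff}(q)$ terms already present in \eqref{eq: GLEn3} this reproduces the drift of \eqref{eq: GLE} verbatim, and the leftover terms define $\widetilde f(t) = f_1(t) - L e^{Dt} y_1(0) - \int_0^t L e^{D(t-s)} g(s)\, ds$, which, using $g=(0,f_2)^\intercal$, has exactly the structure of \eqref{eq: f-ran}. It is worth contrasting this with the original dynamics \eqref{eq: orth0}, whose input $\widehat R u$ carries the coupling $-A_{21}q$: an integration by parts based on the identity $\begin{bmatrix}0\\-A_{21}q\end{bmatrix}=D\begin{bmatrix}A_{22}^{-1}A_{21}q\\0\end{bmatrix}$ together with $\dot q = p$ converts that $q$-coupling into the $R\,p$ coupling, at the cost of a boundary term that is precisely the static correction absorbed into $F_{\rm eff}$ \eqref{eq: f-eff} plus an initial-data shift. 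This is what makes the reformulation with input $R\,p$ and a mean-zero $y_1(0)$ faithful.

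The \emph{main obstacle} is the stochastic part: verifying that $\widetilde f$ obeys the second fluctuation--dissipation theorem \eqref{eq: fdt3}. Writing $\mu(t)=e^{Dt}y_1(0)+\int_0^t e^{D(t-s)}g(s)\,ds$ so that $\widetilde f = f_1 - L\mu$, I would expand $\langle \widetilde f(t)\widetilde f(t')^\intercal\rangle$ into four contributions. The autocovariance $\langle f_1(t)f_1(t')^\intercal\rangle = 2k_BT\Gamma_{11}\delta(t-t')$ furnishes the singular part. For $t>t'$ the term $L\langle \mu(t)\mu(t')^\intercal\rangle L^\intercal$ equals $k_BT\, L e^{D(t-t')}Q L^\intercal$, but only because the initial covariance $k_BTQ$ is the \emph{stationary} covariance of the Ornstein--Uhlenbeck part, which is guaranteed exactly by the Lyapunov relation \eqref{eq: Lya}. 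The delicate point is that $Q L^\intercal = [A_{22}^{-1}A_{21},\,\Gamma_{21}]^\intercal$ differs from $R=[A_{22}^{-1}A_{21},\,-\Gamma_{21}]^\intercal$ in the friction block, so this term alone does \emph{not} reproduce $\theta$; the discrepancy is repaired by the cross-correlation $\langle L\mu(t) f_1(t')^\intercal\rangle$, which is nonzero because $f_1=\Phi^\intercal f$ and $g=(0,\Psi^\intercal f)^\intercal$ are built from the \emph{same} white noise and couple through $\langle f_2(s)f_1(t')^\intercal\rangle = 2k_BT\Gamma_{21}\delta(s-t')$. Carrying out the bookkeeping, for $t>t'$ the continuous part collapses to $k_BT\, L e^{D(t-t')}\big(Q L^\intercal - 2[0,\Gamma_{21}]^\intercal\big) = k_BT\, L e^{D(t-t')}R = k_BT\,\theta(t-t')$, exactly \eqref{eq: fdt3}. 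The sign reconciliation between $QL^\intercal$ and $R$ via this noise cross-correlation, handled consistently for both time orderings, is where the real work lies; a secondary point is that the deterministic transient $L e^{Dt}w(0)$, with $w(0)=[A_{22}^{-1}A_{21}q(0),\,0]^\intercal$, which distinguishes the mean-zero $y_1(0)$ from the shifted initial data in \eqref{eq: f-ran}, decays because $D$ is stable and hence affects neither the stationary statistics nor the FDT.
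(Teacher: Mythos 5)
Your proposal is correct and follows essentially the same route as the paper's proof: Duhamel's formula for $y_1$, splitting $Ly_1$ into the convolution $\int_0^t \theta(t-\tau)p(\tau)\,d\tau$ plus a Gaussian term, computing the covariance via the stationarity/Lyapunov relation \eqref{eq: Lya} and the It\^{o} isometry, and closing with exactly the paper's key identity $QL^\intercal - 2\left[0,\;\Gamma_{21}\right]^\intercal = R$ (equation \eqref{eq: fdtcond}), which the cross-correlation $\langle f_2(s) f_1(t')^\intercal\rangle = 2k_BT\,\Gamma_{21}\delta(s-t')$ supplies. Your additional change-of-variables remark explaining how the input $\wh{R}u$ of \eqref{eq: orth0} becomes $Rp$ (with the boundary term absorbed into $F_{\rm eff}$ and the shifted initial data) is a sound elaboration of what the paper asserts without proof before the theorem, and correctly spells out the sign reconciliation that the paper's displayed computation leaves implicit.
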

\begin{proof}

Using a variation of constant formula, we find,
\[
 y_1(t) = e^{D t}  y_1(0) + \int_0^t  e^{D(t-\tau)}Rp(\tau) d\tau
 +  \int_0^t  e^{D(t-\tau)}{g}(\tau)d\tau. \]
Next we define the out quantity $ w_1(t)$ from \eqref{eq: orth1},
\begin{align}
 w_1(t) = L  y_1 = \int_0^t \theta(t-\tau) p(\tau) d\tau+ Le^{Dt} y_1(0)+\int_0^t  Le^{D(t-\tau)}{g}(\tau) d\tau
=:  \int_0^t  \theta(t-\tau) p(\tau) d\tau+  \zeta(t).
\end{align}
Here $ \zeta$ is the sum of the last two terms. For $t>t'$, we have,
\begin{align}
\langle  \zeta(t) \zeta^\intercal(t')\rangle = 
k_BT Le^{Dt}Qe^{D^\intercal t'} L^\intercal+ \Big\langle \int_{0}^{t} \int_0^{t'}Le^{D(t-\tau)} g(\tau) g^\intercal(\tau') e^{D^\intercal (t-\tau')} L^\intercal d\tau' d\tau\Big\rangle 
=k_BTLe^{D(t-t')} QL^\intercal.
\end{align}
{\brown The second step can be carried out by using the It\^{o}'s isometry.}

Now we replace the term {$-A_{12}\xi(t) - \Gamma_{12}\eta (t)$} by $ w_1(t)$  in system \eqref{eq: GLEn3}. We  have,
\begin{equation}\label{eq: GLEn}
\left\{
\begin{aligned}
 \dot{q}(t)= &  p(t), \\
   \dot{p}(t)=& F_\text{eff}(q)  - \Gamma_{11} p(t) - \int_0^t  \theta(t-\tau) p(\tau) d\tau- \zeta(t) + {f_1(t)}.
\end{aligned}
\right.
\end{equation}
Let $\wt  f_1(t)=f_1(t)-\zeta(t)$. With the assumption that the initial data of $ y_1$ is uncorrelated with the noise term, we get, 
\begin{align}
\begin{split}
\langle \wt  f_1(t) \wt  f_1^\intercal(t')\rangle =&
2k_BT \Gamma_{11} \delta(t-t')
-2k_BTLe^{D(t-t')} 
\left[
\begin{array}{c}
0\\\Gamma_{21}
\end{array}
\right]
+k_BTLe^{D(t-t')}QL^\intercal
\\
=&2k_BT \Gamma_{11} \delta(t-t')
+k_BT \theta(t-t').
\end{split}
\end{align}
The last step requires that
\begin{align}\label{eq: fdtcond}
QL^\intercal -2\left[
\begin{array}{c}
0\\\Gamma_{21}
\end{array}
\right] = R,
\end{align}
which can be easily verified. {\brown Now, according to theory of Gaussian processes \cite{Doob44}, the processes $\wt f(t)$ and $\wt f_1(t) $ are equivalent. }

{\brown Finally,  the memory terms in  (\ref{eq: GLEn}) with (\ref{eq: GLE}) are the same,  the proof of equivalence is thus completed. }

\end{proof}
It is clear that the dynamics (\ref{eq: orth1}) is very similar to dynamics (\ref{eq: orth0}), with subtle modification: $p(t)$ instead of $ u(t)$ is involved in the system. {\brown More importantly,  in  (\ref{eq: orth1})   the input and the output of the dynamics  have the same dimension. } Our following discussion will be based on $ y_1$, and instead, we will denote this term as $ y$ due to the equivalence.

\smallskip

We now have formulated the problem as a reduce-order problem: The dynamics of $ y$ involves a large-dimensional dynamical system, in which the variable $ p(t)$ is acting as a control variable. Meanwhile, what is of interest to the coarse-grained dynamics is the quantity $L  y$. As a result, we have at hand a large dynamical system with low-dimensional input and a low-dimensional output.

\subsection{{Properties of General }Galerkin Projections}
{\brown A remarkable success in order reduction problems is the Galerkin projection method to appropriately defined subspaces  \cite{bai2002krylov,villemagne1987model}.
 Motivated by such success, }
we first consider a general Galerkin projection of the SDEs \eqref{eq: orth1},
\begin{align}\label{eq: orth2}
\dot { y}(t) = D  y(t) + Rp(t)+g(t).
\end{align}
More specifically, we  seek $\wh { y}(t)$ in the subspace $X_n=\text{span}\{{ V_1}, V_2, \dots V_n\}$, { with each basis having $m$ columns}. We denote the space of test functions by $\wt  X_n=\text{span}\{W_1, \dots, W_n\}$. Now the projection can be stated as follows: find $\wh { y}(t)\in X_n$, such that for any  {\brown $ \chi(t) \in \wt  X_n$},
$$
(\dot { \wh{ y}}(t) - D \wh{  y}(t) - Rp(t)-g(t), {\brown \chi(t)} )=0.
$$

To put it in  a matrix-vector form, let $V=[{ V_1}, V_2, \dots V_n]$ and $W=[W_1, W_2, \dots, W_n]$, and we choose the columns as the basis for the two subspaces. The approximate solution is written as, 
\begin{equation}\label{eq: z2y}
\widehat{ y}(t) = V  z(t),
\end{equation}
with $ z(t)$ being the nodal values. 
Then the Galerkin projection yields, 
\begin{align}
\widehat{M} \dot{ z}(t) = \widehat{D}   z (t)+ W^\intercal R  p(t) + W^\intercal g(t),
\end{align}
where we have defined, \begin{equation}\label{eq: MD}
\begin{aligned}
\widehat{M}=W^{\intercal} V, \quad
\widehat{D}=W^{\intercal}D V.
\end{aligned}
\end{equation}
With the assumption that $\wh M$ is nonsingular, we can write
\begin{equation}\label{eq: projD}
\dot{ z}(t) = \widehat{M}^{-1} \widehat{D}  z(t) + \widehat{M}^{-1} W^\intercal R p(t) +\widehat{ f}(t), 
\end{equation}
{\red where 
\begin{align}\label{eq: whf}
\widehat{\bs f}(t)= \widehat{M}^{-1} W^\intercal \bs f(t),
\end{align}}
 and its covariance matrix is given by,
\begin{equation}\label{eq: sig}
  \langle \wh f(t) \wh f(t')^\intercal \rangle = \wh \Sigma \delta(t-t'), \quad \widehat{\Sigma}=  \widehat{M}^{-1} W^\intercal  \Sigma W \widehat{M}^{-\intercal}. 
\end{equation}

With this reduction, we can now write down the reduced model  involving  the variables $(p, q, z)$,
\begin{subequations}
\begin{align}
\begin{split}
 \dot{q}(t)= &  p(t), \\
   \dot{p}(t)=& F_\text{eff}(q) - \Gamma_{11} p(t) - LV z(t) + {f_1(t)},
   \end{split}\\
\dot{ z}(t) = &\widehat{M}^{-1} \widehat{D}  z(t) + \widehat{M}^{-1} W^\intercal R p(t) +\widehat{ f}(t), 
\end{align}\label{eq: Krylov'2}
\end{subequations}

In contrast to the conventional order  reduction problems  \cite{bai2005reduced}, the current approach yields a noise term. {\brown Its presence } brings up an important issue: appropriate conditions are needed to ensure that the solution reaches  correct equilibrium, which will be addressed here. 

Due to ergodicity, the solution of the original SDE,  $ y(t),$ will evolve into a stationary process, and we expect the approximate solution to become a  stationary process as well. Assuming that the initial variance of $ z$ is $k_BT\widehat{Q}$,  that is, {\brown
\begin{equation}
\langle z(0) z(0)^\intercal \rangle = k_BT\widehat{Q},
\end{equation}}
then the stationarity implies that $\widehat{Q}$ must satisfy the Lyapunov equation   \cite{risken1984fokker},
\begin{equation}\label{eq: cond-1}
  k_BT(\widehat{M}^{-1} \widehat{D} \widehat{Q} + \widehat{Q}\widehat{D}^\intercal \widehat{M}^{-\intercal}) = - \widehat{\Sigma}. \tag{Condition {\bf A}}
\end{equation}
This condition, as one of the necessary conditions to ensure the second FDT, will be referred to as {Condition {\bf A}}.

{\brown Meanwhile, the projected dynamics \eqref{eq: Krylov'2} corresponds to an approximation of the GLEs \eqref{eq: GLE}. 
This can be verified by directly solving \eqref{eq: orth2}, and then substitute $L\wh y$ into the equation for $p.$}
With direct calculations, we find that  the approximated kernel can be expressed as,
\begin{equation}\label{eq: th-approx}
 \theta(t) \approx  \wh{\theta}(t) :=L V e^{\widehat{M}^{-1} \widehat{D} t} \widehat{M}^{-1} W^\intercal R.
\end{equation}
Moreover, the low dimensional output is approximated by,
\begin{equation}
  w(t) \approx \widehat{ w}(t) = L\widehat{ y} = \int_0^T  \wh{\theta}(t-\tau) p(\tau) d\tau + \wh{\zeta}(t),  
\end{equation}
where
$$
\widehat{\zeta}(t) = LVe^{\widehat{M}^{-1}\widehat{D}t} z(0) + \int_0^T  LVe^{\widehat{M}^{-1}\widehat{D}(t-\tau)} \widehat{ f}(\tau)d\tau.
$$
{\brown
As a result, we obtain an approximate GLE model, 
\begin{equation}\label{eq: GLE'}
\left\{
\begin{aligned}
 \dot{q}(t)= &  p(t), \\
   \dot{p}(t)=& F_\text{eff}(q) - \Gamma_{11} p (t)- \int_0^T \wh\theta(t-\tau) p(\tau)d\tau + \widehat{\zeta}(t) + f_1(t).
\end{aligned}
\right.
\end{equation}
}

The term $\widehat{\zeta}(t)$  introduces an added Gaussian noise to the coarse-grained dynamics. 
 Together with the Lyapunov equation \eqref{eq: cond-1}, we can express its time  correlation as follows,
\begin{equation}\label{eq: thetat}
  \langle \wh{\zeta}(t) \wh{\zeta}(t')^\intercal \rangle = k_B T  L V e^{\widehat{M}^{-1}\widehat{D} (t-t')}  \wh{Q} V^\intercal L^\intercal=k_BT \wt\theta(t-t'),\quad \wt\theta(t):=  L V e^{\widehat{M}^{-1}\widehat{D} t}  \wh{Q} V^\intercal L^\intercal.
\end{equation}

\smallskip
Clearly, in general the correlation of the noise $\wt\theta(t)$ in \eqref{eq: thetat} might not be consistent with the memory kernel $\wh\theta(t)$  in \eqref{eq: GLE'} and \eqref{eq: th-approx}. Namely, the second FDT, a necessary condition for the reduced model to have the correct statistics, may not be fulfilled.   The following theorem identifies the condition under which such consistency can be guaranteed.

\begin{theorem}\label{thm:2fdt}
{\red The coarse grained dynamics \eqref{eq: Krylov'2} and \eqref{eq: GLE'} derived from the 
Petrov-Galerkin projection} will obey the second FDT \eqref{eq: fdt3}, if the following condition is satisfied:
$$
\widehat{M}\widehat{Q} V^\intercal L^\intercal=W^\intercal Q L^\intercal.
$$
\end{theorem}

\begin{proof}
Recall that $ w(t)=Ly$ from (35b) needs to be injected into the dynamics of the reduced variables (35a).
The resulting random noise  is  $\wt  \zeta=-\widehat{\zeta}(t) + f_1(t)$, with time correlation,
\begin{align}
\langle \wt  \zeta(t) \wt  \zeta^\intercal (t') \rangle=2k_BT \Gamma_{11}\delta(t-t') + 
 k_B T  L V e^{\widehat{M}^{-1}\widehat{D} (t-t')}  \wh{Q} V^\intercal L^\intercal
 -2k_BT LV e^{\widehat{M}^{-1}\widehat{D} (t-t')} \widehat{M}^{-1}{W}^\intercal 
\left[
\begin{array}{c}
0\\
\Gamma_{21}
\end{array}
\right].
\end{align}
It is clear that if,
\begin{align}\label{eq: fdtcond2}
\widehat{Q} V^\intercal L^\intercal - 2 \widehat{M}^{-1}W^\intercal 
\left[
\begin{array}{c}
0\\
\Gamma_{21}
\end{array}
\right] = \widehat{M}^{-1}W^\intercal R,
\end{align}
 this will result in the second FDT:
$$
\langle \wt {\zeta} (t) \wt {\zeta}^\intercal (t') \rangle = 2 k_BT\Gamma_{11}\delta (t-t')+ k_BT \widehat{\theta}(t-t').
$$
In light of Equation (\ref{eq: fdtcond}), Equation (\ref{eq: fdtcond2}) is equivalent to
\begin{align} \label{eq: th2}
\widehat{M}\widehat{Q} V^\intercal L^\intercal=W^\intercal Q L^\intercal.
\tag{Condition {\bf B}}
\end{align}

This equation will be referred to as condition {\bf B}.
\end{proof}

Conditions {\bf A} and {\bf B} constitute the basis for constructing consistent stochastic reduced models. While condition {\bf A} can be enforced by solving the Lyapunov equation,    condition {\bf B} may not be satisfied by an arbitrary Galerkin projection. Therefore, we need to choose appropriate subspaces  for this to hold automatically.

\section{The Projection to Krylov Subspaces}

In this section, we will construct Krylov subspaces for the Galerkin projection procedure, which subsequently leads to approximations of the memory kernel function and 
random force.  
{\brown 
We will also discuss several issues related to the practical implementations. }

It turns out that the Krylov subspace approach has a close connection to a two-point Pad\`{e} approximation, previously studied in   \cite{LiXian2014,li2014coarse,ma2016derivation,lei2016generalized} to incorporate both long time and short time statistics. We will review this approach briefly, which will be referred to as moment-matching, and then make connections to the Krylov subspace projection approach.
{ 
We will consider the case where the damping coefficient is constant, i.e., $\Gamma=\gamma I.$}

\subsection{The Moment Matching Approach}\label{sec: momentmatching}
Define the moments,
\begin{equation}\label{eq: moms}
 M_0= \theta(0), \;\;M_1= \theta'(0), \;\; \cdots, \;M_\ell=  \theta^{(\ell)}(0),\;\cdots, \;\;M_\infty=\int_0^\infty \theta(t)dt.
\end{equation}
Notice that the moment $M_\infty$ corresponds to the correlation time. 
With the moments, the memory function at $t=0$ can be expanded as:
$$
\theta(t)=M_0+M_1t+\frac{M_2}{2}t^2+\dots+\frac{M_\ell}{\ell!}t^\ell+\dots.
$$
Since the exact memory kernel is $Le^{Dt}R$, it is clear that the moments are given by,$$
M_0=LR, \quad M_1=LDR,\quad\dots \quad M_\ell=LD^\ell R,\quad M_{\infty} = - LD^{-1}R.
$$

Meanwhile, the Laplace transform can be expanded near zero, 
\begin{equation}\label{eq: Th}
\Theta(s)=\frac{M_0}{s}+\frac{M_1}{s^2} + \dots+\frac{M_\ell}{s^{\ell+1}}+\dots,
\end{equation}
which can be obtained by repeated integration by parts  \cite{BlHa86}.


{\brown The moment matching procedure is essentially a rational approximation}  for the Laplace transform of the memory kernel,
$$\Theta_n(s)=(s^nI - s^{n-1} B_0 - s^{n-2}B_1-\dots-B_{n-1})^{-1}(s^{n-1} C_0 + s^{n-2}C_1+\dots+C_{n-1}),$$
such that,
$$\Theta_n(0)=\Theta(0) (= M_\infty), \quad \theta_n^{(\ell)}(0)=\theta^{(\ell)}(0) (=M_\ell) \;{\rm for }\; \ i=0,\dots, 2n-2,$$
To solve for the coefficients $B_i$, one needs to solve a linear system,
\begin{align}\label{multiB}
\left[
\begin{array}{cccc}
-M_{\infty} &M_0&\dots&M_{n-2}\\
M_0&M_1&\dots&M_{n-1}\\
\dots\\
M_{n-2}&M_{n-1}&\dots&M_{2n-3}
\end{array}
\right]
\left[
\begin{array}{c}B_{n-1}\\B_{n-2}\\\dots\\B_0\end{array}
\right]=
\left[
\begin{array}{c}
M_{n-1}\\
M_n\\
\dots\\
M_{2n-2}
\end{array}
\right]
\end{align}

We will use the second order approximation as an  example $(n=2)$. In this case, the approximation would proceed as follows,
\begin{enumerate}
\item Set the Laplace transform of the approximated kernel to,
$$\Theta_2(s)=(s^2 - s B_0 - B_1)^{-1}(s C_0 + C_1).$$
\item Solve for the coefficients using the moments:
\begin{align}
\left[
\begin{array}{cc}
-M_{\infty} & M_0\\
M_0 & M_1
\end{array}
\right]
\left[ \begin{array}{c}B_1\\B_0\end{array} \right]=
\left[\begin{array}{c} M_1\\M_2\end{array}\right],
\quad C_0=M_0,\quad C_1 = -B_1 M_{\infty}.
\end{align}
\item The approximate kernel function in the real time domain can be expressed as:
\begin{align}\label{eq: BB}
\theta_2(t) \approx [0 \quad I]e^{ Bt}
 C,
\quad {\rm where}\quad
 B = 
\left[\begin{array}{cc}
0 & B_1\\
I & B_0
\end{array}\right], \quad  C=\left[ \begin{array}{c} C_1\\C_0\end{array}\right]
.
\end{align}

\end{enumerate}

\noindent{Remark1 :} Once $B$ and $C$ are computed, the variance of the random noise $\zeta$ in the stochastic equation $\dot{z}= B z + C p + \zeta(t)$, as well as the variance of the $z(0)$, will be chosen based on these two matrices to satisfy the FDT. Such computation is quite involved in general.  Fortunately, as we will show, the subspace projection approach simplifies this effort considerably. 

{\blue 
\noindent{Remark 2:} Although one can increase the order of the approximation by simply introducing more moments, there remains an important practical problem, that is,  the condition number of the matrix in equation (\ref{multiB}) increases rapidly as the order increases. We hereby list the condition numbers in the following Table \ref{tb: cond} for a test problem.

\begin{table}[h]
\caption{Condition numbers of the matrix in (\ref{multiB}) in the moment matching procedure }\label{tb: cond}
\begin{center}
\begin{tabular}{|c|c|c|c|c|c|c|}
\hline
Approximation order & 2 & 3 & 4 & 5 & 6 & 7\\
\hline
Matrix condition number & 4.98E03 & 1.59E12& 4.57E14 &1.11E22&5.58E27&1.76E33\\
\hline

\end{tabular}
\end{center}
\end{table}%

}

{\brown We now turn to the Krylov subspace projections. }

\subsection{First Order Subspace Projection $n=1$} 

As the first approximation, 
we choose the subspaces  
\begin{equation}\label{eq: kr1}
  V=R, \text{and}\;  W=D^{-\intercal}L^\intercal.
\end{equation}

We  show that the resulting approximate kernel function is the same as that from the moment matching approach.


\begin{theorem}\label{thm: 1stEQ}
{\red By taking  $V=R$ and  $W=D^{-\intercal}L^\intercal$ in the Galerkin projection, 
the memory kernel $\wh \theta_1(t)$ in the projected dynamics \eqref{eq: GLE'}} 
is equivalent to that from the first order moment matching method.
 In particular,  two moments are matched exactly by the approximate kernel functions,
\begin{equation}
 \wh\theta(0) = M_0, \quad {\text and}\int_0^{+\infty} \wh{\theta}_1(t) dt =M_\infty.
\end{equation}

\end{theorem}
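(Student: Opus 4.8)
The plan is to feed the prescribed subspace choices $V=R$ and $W=D^{-\intercal}L^\intercal$ straight into the general projected-kernel formula \eqref{eq: th-approx} and watch it collapse to a single matrix exponential whose only data are the moments $M_0$ and $M_\infty$. First I would compute the three reduced matrices from \eqref{eq: MD}. Since the chosen $W$ gives $W^\intercal = LD^{-1}$, the identities $M_0=LR$ and $M_\infty=-LD^{-1}R$ recorded just after \eqref{eq: moms} yield
\[
\wh M = W^\intercal V = LD^{-1}R = -M_\infty,\qquad \wh D = W^\intercal D V = LR = M_0,\qquad W^\intercal R = LD^{-1}R = -M_\infty .
\]
The crucial structural facts are that $\wh D = M_0$, that $\wh M = -M_\infty$ is invertible by the standing nonsingularity assumption on $\wh M$ made after \eqref{eq: MD}, and that $W^\intercal R$ coincides with $\wh M$.

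Next I would assemble the kernel. Using $LV = LR = M_0$, $\wh M^{-1}\wh D = -M_\infty^{-1}M_0$, and $\wh M^{-1}W^\intercal R = (-M_\infty)^{-1}(-M_\infty)=I$, formula \eqref{eq: th-approx} becomes
\[
\wh\theta_1(t) = LV\, e^{\wh M^{-1}\wh D\, t}\,\wh M^{-1}W^\intercal R = M_0\, e^{-M_\infty^{-1}M_0\, t}.
\]
Both moment identities then drop out of this closed form: evaluating at $t=0$ gives $\wh\theta_1(0)=M_0$, and integrating over $[0,\infty)$ gives $\int_0^\infty \wh\theta_1(t)\,dt = M_0\,(M_\infty^{-1}M_0)^{-1} = M_\infty$, which are exactly the two claimed matched moments.

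To close the equivalence with first-order moment matching, I would specialize the rational ansatz to $n=1$, namely $\Theta_1(s)=(sI-B_0)^{-1}C_0$; imposing $\theta_1(0)=M_0$ and $\Theta_1(0)=M_\infty$ forces $C_0=M_0$ and $B_0=-M_0M_\infty^{-1}$, so the moment-matching kernel is $\theta_1(t)=e^{B_0 t}C_0 = e^{-M_0M_\infty^{-1}t}M_0$. Comparing with $\wh\theta_1(t)=M_0 e^{-M_\infty^{-1}M_0 t}$, the two coincide by the elementary commutation identity $M_0\,g(M_\infty^{-1}M_0)=g(M_0M_\infty^{-1})\,M_0$, valid for any analytic $g$ and checked termwise on the power series.

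The one point needing care is the convergence of $\int_0^\infty \wh\theta_1(t)\,dt$, which requires the spectrum of $\wh M^{-1}\wh D=-M_\infty^{-1}M_0$ to lie in the open left half-plane. This is precisely the stability condition already demanded for stationarity of the reduced dynamics, so I would invoke it rather than reprove it. Everything else is direct matrix algebra resting on the two bookkeeping identities $M_0=LR$, $M_\infty=-LD^{-1}R$ and the invertibility of $\wh M$, so I expect no further obstacle.
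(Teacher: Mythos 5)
Your proposal is correct, and its core is the same as the paper's: substitute $V=R$, $W=D^{-\intercal}L^\intercal$ into \eqref{eq: th-approx} and reduce everything to the bookkeeping identities $M_0=LR$, $M_\infty=-LD^{-1}R$. The paper stops at verifying the two moment conditions, using only that $\wh M = W^\intercal V = W^\intercal R$ collapses $\wh\theta_1(0)=LV\wh M^{-1}W^\intercal R$ to $LR$ and $\int_0^{\infty}\wh\theta_1(t)\,dt = -LV\wh D^{-1}W^\intercal R$ to $M_\infty$; the claimed ``equivalence'' to first-order moment matching is then left implicit, on the grounds that the order-one rational ansatz is pinned down by exactly these two conditions. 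You go further on two counts that add genuine value: you exhibit the closed form $\wh\theta_1(t)=M_0\,e^{-M_\infty^{-1}M_0 t}$ and prove \emph{literal} equality with the moment-matching kernel via the commutation identity $M_0\,g(M_\infty^{-1}M_0)=g(M_0M_\infty^{-1})\,M_0$, which makes the equivalence unconditional rather than inferred from uniqueness of the interpolant; and you explicitly flag that convergence of the improper integral needs the spectrum of $\wh M^{-1}\wh D=-M_\infty^{-1}M_0$ in the open left half-plane, a stability hypothesis the paper uses silently. Your sign bookkeeping is also cleaner than the paper's: its displayed chain $\int_0^{\infty}\wh\theta_1(t)\,dt = W^\intercal R = LD^{-1}R = M_\infty$ contains a sign slip (since $W^\intercal R = LD^{-1}R = -M_\infty$), whereas your chain $-LV\wh D^{-1}W^\intercal R = -M_0M_0^{-1}(-M_\infty)=M_\infty$ is consistent throughout. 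One minor caveat: at order one the paper's linear system \eqref{multiB} would give $B_0=-M_\infty^{-1}M_0$ (coefficient acting on the other side) rather than your $B_0=-M_0M_\infty^{-1}$; by the symmetry of the moments (Lemma~\ref{lem: symmetric}) and your own commutation identity the two resulting kernels coincide, but it is worth a sentence noting this reconciliation.
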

\begin{proof}

With direct computation, we get from \eqref{eq: th-approx} that,
$$
\widehat \theta_1 (0) = LV\widehat {M}^{-1} W^{\intercal} R, 
\quad
\int_0^{+\infty} \wh{\theta}_1(t) dt= -LV\widehat{D}^{-1}W^\intercal R.
$$
By the particular choice of $V$ and $W$ \eqref{eq: kr1}, we have $\widehat{M}=W^\intercal R$. Therefore,
$$
\wh \theta_1(0)=LR=M_0,\quad \int_0^{+\infty} \wh{\theta}_1(t) dt=W^\intercal R =LD^{-1}R=M_\infty.
$$
\end{proof}

\begin{theorem}\label{thm: 1st}
{\red By taking  $V=R$ and  $W=D^{-\intercal}L^\intercal$ in the Galerkin projection, }
the projected dynamics  \eqref{eq: GLE'} will automatically satisfy the second FDT \eqref{eq: fdt3}.
\end{theorem}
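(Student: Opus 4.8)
The plan is to invoke Theorem~\ref{thm:2fdt}: it suffices to exhibit a matrix $\wh Q$ that is simultaneously consistent with the stationarity requirement \eqref{eq: cond-1} (Condition~A) and the FDT requirement \eqref{eq: th2} (Condition~B). Since Condition~A pins down the stationary covariance of the reduced noise, the real content is that this same $\wh Q$ must also satisfy Condition~B; equivalently, I will produce a single candidate $\wh Q$ and check both equalities against it. If both hold, Theorem~\ref{thm:2fdt} delivers the second FDT \eqref{eq: fdt3} directly.

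First I would exploit the standing assumption $\Gamma=\gamma I$ of this section. Since $\Phi$ and $\Psi$ carry orthonormal bases of the complementary subspaces $Y$ and $Y^\perp$, one gets $\Gamma_{12}=\gamma\Phi^\intercal\Psi=0$ and $\Gamma_{21}=\gamma\Psi^\intercal\Phi=0$, so that $L=[A_{12},0]$ and the FDT identity \eqref{eq: fdtcond} collapses to the clean relation $R=QL^\intercal$. Next I would record the elementary consequences of the Krylov choice \eqref{eq: kr1}, namely $W^\intercal D=(D^{-\intercal}L^\intercal)^\intercal D=L$ and $D^\intercal W=L^\intercal$. These give at once $\wh M=W^\intercal V=W^\intercal R=W^\intercal QL^\intercal$ and $\wh D=W^\intercal DV=LV=LR=M_0=A_{12}A_{22}^{-1}A_{21}$, the latter being symmetric. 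Feeding $W^\intercal QL^\intercal=\wh M$ and $V^\intercal L^\intercal=(LR)^\intercal=\wh D$ into Condition~B \eqref{eq: th2} reduces it to $\wh M\,\wh Q\,\wh D=\wh M$, which forces the candidate $\wh Q=\wh D^{-1}$.

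It then remains to verify that this $\wh Q=\wh D^{-1}$ also satisfies the Lyapunov equation \eqref{eq: cond-1}; this is the main obstacle, as it is precisely where the two conditions must be reconciled. I would first simplify $\wh\Sigma$ using the Lyapunov form $\Sigma=-k_BT(DQ+QD^\intercal)$ from \eqref{eq: Lya}: combined with $W^\intercal D=L$, $D^\intercal W=L^\intercal$ and $R=QL^\intercal$, a short computation gives $W^\intercal D Q W=\wh M^\intercal$ and $W^\intercal QD^\intercal W=\wh M$, hence $W^\intercal\Sigma W=-k_BT(\wh M+\wh M^\intercal)$ and therefore $\wh\Sigma=\wh M^{-1}W^\intercal\Sigma W\wh M^{-\intercal}=-k_BT(\wh M^{-1}+\wh M^{-\intercal})$. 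Substituting $\wh Q=\wh D^{-1}$ into the left-hand side of \eqref{eq: cond-1} and using the symmetry $\wh D^\intercal=\wh D$ collapses $\wh M^{-1}\wh D\wh Q$ and $\wh Q\wh D^\intercal\wh M^{-\intercal}$ to $\wh M^{-1}$ and $\wh M^{-\intercal}$ respectively, so both sides equal $k_BT(\wh M^{-1}+\wh M^{-\intercal})$ and Condition~A holds. With Conditions~A and~B verified for the single matrix $\wh Q=\wh D^{-1}$, Theorem~\ref{thm:2fdt} yields the second FDT. The delicate point is the bookkeeping that keeps both $\wh\Sigma$ and the Lyapunov left-hand side in the manifestly equal, symmetric form $\wh M^{-1}+\wh M^{-\intercal}$ without ever having to assume symmetry of $\wh M$ itself.
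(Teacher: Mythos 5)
Your proof is correct, and it lands on exactly the paper's answer ($\wh Q=\wh D^{-1}=M_0^{-1}$ verifying both Condition {\bf A} and Condition {\bf B}) by a genuinely different, leaner route. The paper works concretely: using $\Gamma_{12}=0$ it writes out $D^{-1}$ explicitly, evaluates $W^\intercal QL^\intercal=LD^{-1}QL^\intercal=-M_\infty$, identifies $\wh M=-M_\infty$, $\wh D=M_0$, and $\wh\Sigma=2k_BTM_\infty^{-1}$ from the block structure of $Q$ and $\Sigma$, solves the resulting Lyapunov equation to get $\wh Q=M_0^{-1}$, and only then confirms Condition {\bf B} via the cancellation in \eqref{eq: tmp}. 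You run the logic in the opposite direction and stay structural: the collapse of \eqref{eq: fdtcond} to $R=QL^\intercal$ (valid since $\Gamma_{21}=0$), together with $W^\intercal D=L$ and $D^\intercal W=L^\intercal$, turns Condition {\bf B} into $\wh M\wh Q\wh D=\wh M$, which forces the candidate $\wh Q=\wh D^{-1}$; you then verify Condition {\bf A} abstractly from $W^\intercal\Sigma W=-k_BT\big(\wh M+\wh M^\intercal\big)$, i.e.\ $\wh\Sigma=-k_BT\big(\wh M^{-1}+\wh M^{-\intercal}\big)$, so you never need $D^{-1}$, $M_\infty$, or the explicit blocks of $\Sigma$. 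What your version buys is transparency: it isolates the two identities ($R=QL^\intercal$ and symmetry of $\wh D=LR$) that make the first-order projection FDT-consistent, and it does so without ever invoking symmetry of $\wh M$, which the paper establishes and uses. What the paper's explicit computation buys is the concrete formulas $\wh M=-M_\infty$, $\wh\Sigma=2k_BTM_\infty^{-1}$, $\wh Q=M_0^{-1}$, which are then recycled in the second-order proof. Both arguments share the same tacit hypotheses — invertibility of $\wh M$ (assumed after \eqref{eq: MD}) and of $M_0$, plus uniqueness of the solution of \eqref{eq: cond-1} (guaranteed when $\wh M^{-1}\wh D$ is stable) so that the $\wh Q$ you exhibit is the one actually used to initialize $z(0)$ — so your reversed order of quantifiers (derive $\wh Q$ from {\bf B}, then check {\bf A}) is just as legitimate as the paper's (solve {\bf A}, then check {\bf B}).
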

\begin{proof}
 We need to show that \ref{eq: th2} is satisfied by this choice of $W$ and $V$ in this case.
Given  (\ref{eq: sig}) and \ref{eq: cond-1}, we have
$$
-W^\intercal \Sigma W=-\widehat M\widehat \Sigma \widehat{M}^{-\intercal}= k_BT (\widehat D \widehat Q \widehat M^\intercal+ \widehat M \widehat Q\widehat D^\intercal).
$$
Notice that since $W=D^{-\intercal}L^\intercal$,  one has $\widehat{D}=LV$. In addition, from Equation (\ref{eq: Lya}),  we have
$$
LQW + W^\intercal Q L^\intercal = LV\widehat{Q}\widehat{M}^\intercal + \widehat{M}\widehat{Q}V^\intercal L^\intercal.
$$
It is clear that on both sides, it is a summation of a matrix and its transpose. By moving terms we find,
\begin{equation}\label{eq: tmp}
LQW -LV\widehat{Q}\widehat{M}^\intercal =   \widehat{M}\widehat{Q}V^\intercal L^\intercal - W^\intercal Q L^\intercal
\end{equation}
and  \ref{eq: th2} would hold if either  side equals to zero. {\brown We will examine the two terms on the right hand side.  }

Since $\Gamma\equiv\gamma I $, we have $\Gamma_{12}=0$. Further 
notice that,
\begin{align*}
D^{-1}=\left[
\begin{array}{cc}
-A_{22}^{-1}\Gamma_{22} & -A_{22}^{-1}\\
I&0
\end{array}
\right].
\end{align*}
By direct calculations, 
the second term on the right hand side can be simplified to,
$$ W^\intercal Q L^\intercal  = LD^{-1}QL^\intercal=-A_{12}A_{22}^{-1}\Gamma_{22}A_{22}^{-1}A_{21}=-M_{\infty}.$$ 

Regarding the first term on the right hand side of \eqref{eq: tmp}, it can be directly verified that,
$$
\wh M = LD^{-1}R = - M_{\infty}, \quad \wh D = LR=M_0,
$$
which are both symmetric matrices. Further, by using Equation (\ref{eq: sig}), we get that,
$$
\wh \Sigma = 2k_B TM_{\infty}^{-1}.
$$
Then the Lyapunov Equation (\ref{eq: cond-1}) becomes
$$
k_B T (M_{\infty}^{-1} M_0 \wh Q + \wh Q M_0^\intercal  M_{\infty}^{-1})=2k_BTM_{\infty}^{-1},
$$from which we obtain the solution $\wh Q=M_0^{-1}$. 

Therefore the first term on the right hand side of \eqref{eq: tmp} becomes\
{
$$
\wh M \wh Q V^\intercal L^\intercal=-M_{\infty} M_0^{-1} R^\intercal L^\intercal = -M_{\infty},
$$
}
which would cancel the second term and complete the proof.
\end{proof}



\subsection{Second Order Subspace Projection $n=2$}


{\brown We now extend the subspace by choosing,}
\begin{equation}\label{eq: kr2}
V=[R, DR], \;\text{and}\; W=[D^{-\intercal} L^\intercal, L^\intercal].
\end{equation}
 
 As a result, 
the two matrices $\wh{M}$ and $\wh{D}$ in the Galerkin formulation are given by,
\begin{equation}
 \wh{M} =\left[\begin{array}{cc} - M_\infty & M_0 \\M_0 & M_1 \end{array}\right], \quad
 \wh{D} =\left[\begin{array}{cc}M_0 & M_1 \\M_1 & M_2 \end{array}\right].
\end{equation}
It's easy to check that $\wh{M}^{-1} \wh{D}= B$, as in equation (\ref{eq: BB}). Within this extended approximation, the approximate memory function is given by,
\begin{equation}\label{eq: thh2}
\wh{\theta}_2(t)= [M_0 \quad M_1] e^{ Bt}\wh{M}^{-1} 
\left[\begin{array}{c} -M_\infty \\M_0 \end{array}\right]
=[M_0 \quad M_1] e^{ Bt}
\left[\begin{array}{c} I \\0 \end{array}\right].
\end{equation}

\smallskip

We first show that this approximation is equivalent to the moment matching procedure.  It is straightforward to verify that the approximate kernel, denoted by $\theta_2$, from the moment matching procedure, should satisfy the following second order differential equation:
\begin{align}\label{eq: 2ndODE}
\ddot{\theta}_2(t) = B_0\dot{\theta}_2(t) + B_1\theta_2(t), \quad\theta_2(0)=M_0, \quad\dot{\theta}_2(0)=M_1.
\end{align}

We now show that the kernel function $\wh\theta_2(t)$ follows the same equation. Thanks to the uniqueness, we can then conclude the equivalence.  The key observation is that,
$$
[M_0 \quad M_1] B = [M_1 \quad M_2].
$$
As a result, it can be quickly verified that
$$
B_1[M_0\quad M_1]+B_0[M_0\quad M_1] B=B_1[M_0 \quad M_1]+B_0[M_1\quad M_2] = [M_1\quad M_2] B = [M_0\quad M_1] B^2,
$$
which combined with \eqref{eq: thh2} would lead to 
\begin{align}
\ddot{\wh \theta}_2(t) = B_0\dot{\wh \theta}_2(t) + B_1\wh \theta_2(t), \quad\wh \theta_2(0)=M_0, \quad\dot{\wh \theta}_2(0)=M_1.
\end{align}
Therefore, we have this following theorem.
\begin{theorem}\label{thm: 2ndEQ}
{\red The reduced model    \eqref{eq: GLE'}} from the Galerkin projection with the choice of $V=[R, DR]$, and $W=[D^{-\intercal}L^\intercal, L^\intercal]$ produces an approximate memory kernel function, which  is equivalent to  that from the second order moment matching procedure.
\end{theorem}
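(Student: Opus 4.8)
The plan is to prove the equivalence at the level of the kernel functions: I will show that the Galerkin kernel $\wh\theta_2$ and the moment-matching kernel $\theta_2$ satisfy one and the same second-order linear matrix initial value problem, and then conclude by uniqueness. First I would assemble the projected matrices. From $V=[R,DR]$ and $W=[D^{-\intercal}L^\intercal,L^\intercal]$ in \eqref{eq: kr2}, so that $W^\intercal$ has block rows $LD^{-1}$ and $L$, a direct block computation using the moment identities $M_\ell=LD^\ell R$ and $M_\infty=-LD^{-1}R$ from \eqref{eq: moms} reproduces the stated forms of $\wh M=W^\intercal V$ and $\wh D=W^\intercal D V$. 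At the same time I would record the output and input blocks $LV=[M_0,M_1]$ and $W^\intercal R=[-M_\infty,M_0]^\intercal$, note that $\wh M$ is invertible so that the projection \eqref{eq: projD} is well defined, and observe that $\wh M^{-1}W^\intercal R=[I,0]^\intercal$ because $[-M_\infty,M_0]^\intercal$ is precisely the first block column of $\wh M$.

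The crucial step is to identify $\wh M^{-1}\wh D$ with the companion matrix $B$ of \eqref{eq: BB}. Rather than inverting $\wh M$, I would verify the equivalent relation $\wh D=\wh M B$ block by block; the nontrivial blocks collapse exactly to the two equations $-M_\infty B_1+M_0 B_0=M_1$ and $M_0 B_1+M_1 B_0=M_2$, which are nothing but the moment-matching linear system \eqref{multiB} for $n=2$ that defines $B_0$ and $B_1$. Feeding $\wh M^{-1}\wh D=B$, $LV=[M_0,M_1]$ and $\wh M^{-1}W^\intercal R=[I,0]^\intercal$ into \eqref{eq: th-approx} then produces $\wh\theta_2(t)=[M_0,M_1]e^{Bt}[I,0]^\intercal$, matching \eqref{eq: thh2}.

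Finally I would show both kernels solve the initial value problem \eqref{eq: 2ndODE}. For $\theta_2$ this is simply its defining equation. For $\wh\theta_2$ the key algebraic fact is $[M_0,M_1]B=[M_1,M_2]$, which is again a restatement of \eqref{multiB}; differentiating $\wh\theta_2(t)=[M_0,M_1]e^{Bt}[I,0]^\intercal$ once gives $\dot{\wh\theta}_2=[M_1,M_2]e^{Bt}[I,0]^\intercal$, and using $[M_0,M_1]B^2=B_0[M_0,M_1]B+B_1[M_0,M_1]$ gives $\ddot{\wh\theta}_2=B_0\dot{\wh\theta}_2+B_1\wh\theta_2$, while $\wh\theta_2(0)=M_0$ and $\dot{\wh\theta}_2(0)=[M_1,M_2][I,0]^\intercal=M_1$. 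Since both kernels solve the same constant-coefficient linear IVP, uniqueness forces $\wh\theta_2=\theta_2$, which is the claimed equivalence.

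I expect the main obstacle to be the bookkeeping forced by non-commutativity: the blocks $M_i$ and $B_i$ are matrices, so one must keep the multiplication order consistent between the moment-matching system \eqref{multiB}, where the $B_i$ act on the right of the moment blocks, and the ODE \eqref{eq: 2ndODE}, where they act on the left. The identity $[M_0,M_1]B=[M_1,M_2]$ is exactly the bridge that reconciles the two orderings, so verifying it carefully is the heart of the argument; a secondary point to confirm is the invertibility of $\wh M$, which both the projection and the uniqueness step rely on.
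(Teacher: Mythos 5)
Your proposal is correct and follows essentially the same route as the paper: computing $\wh M$ and $\wh D$ as moment block matrices, identifying $\wh M^{-1}\wh D$ with the companion matrix $B$ via the moment-matching system \eqref{multiB}, simplifying $\wh M^{-1}W^\intercal R$ to $[I,0]^\intercal$ to obtain \eqref{eq: thh2}, and then showing via the key identity $[M_0,\ M_1]B=[M_1,\ M_2]$ that both kernels satisfy the initial value problem \eqref{eq: 2ndODE}, concluding by uniqueness. Your explicit attention to the left/right ordering of the matrix coefficients $B_0,B_1$ is a point the paper passes over silently, but the argument itself is the paper's argument.
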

Furthermore, we have,
\begin{theorem}\label{thm: 2ndFDT}
{\red The projected system \eqref{eq: GLE'}} with the choice of $V=[R, DR]$, and $W=[D^{-\intercal}L^\intercal, L^\intercal]$ will automatically satisfy the second FDT \eqref{eq: fdt3}.
\end{theorem}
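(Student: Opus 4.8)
The plan is to reduce the statement, via Theorem~\ref{thm:2fdt}, to verifying Condition \textbf{B} (equation~\ref{eq: th2}) for the specific subspaces $V=[R,DR]$ and $W=[D^{-\intercal}L^\intercal,L^\intercal]$, exactly as was done for $n=1$ in Theorem~\ref{thm: 1st}. That is, I would first invoke the already-established reduction so that the entire FDT question collapses to checking $\widehat{M}\widehat{Q}V^\intercal L^\intercal=W^\intercal Q L^\intercal$, where $\widehat{Q}$ is the stationary covariance fixed by the Lyapunov equation of Condition \textbf{A} (equation~\ref{eq: cond-1}).

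The first computational step exploits the hypothesis $\Gamma=\gamma I$. This forces $\Gamma_{12}=\Gamma_{21}=0$ and $\Gamma_{22}=\gamma I$, so that $L=[A_{12},0]$ and $R=[A_{22}^{-1}A_{21},0]^\intercal$. A short calculation then gives the key simplification $M_1=LDR=0$, which collapses the projected matrices to the block forms $\widehat{M}=\begin{bmatrix}-M_\infty & M_0\\ M_0 & 0\end{bmatrix}$ and $\widehat{D}=\begin{bmatrix}M_0 & 0\\ 0 & M_2\end{bmatrix}$. I would also record the two building blocks $V^\intercal L^\intercal=\begin{bmatrix}M_0\\0\end{bmatrix}$ and $W^\intercal Q L^\intercal=\begin{bmatrix}LD^{-1}QL^\intercal\\ LQL^\intercal\end{bmatrix}=\begin{bmatrix}-M_\infty\\ M_0\end{bmatrix}$, reusing the identity $LD^{-1}QL^\intercal=-M_\infty$ from the proof of Theorem~\ref{thm: 1st} together with $LQL^\intercal=A_{12}A_{22}^{-1}A_{21}=M_0$. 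Since $\widehat{M}\begin{bmatrix}I\\0\end{bmatrix}=\begin{bmatrix}-M_\infty\\ M_0\end{bmatrix}$, Condition \textbf{B} is equivalent to the compact requirement $\widehat{Q}\begin{bmatrix}M_0\\0\end{bmatrix}=\begin{bmatrix}I\\0\end{bmatrix}$, that is, the $(1,1)$ block of $\widehat{Q}$ equals $M_0^{-1}$ and its $(1,2)$ block vanishes.

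The heart of the proof is to solve Condition \textbf{A} explicitly and confirm it delivers exactly this structure. Using~\ref{eq: sig} together with $\Sigma L^\intercal=0$ (which holds because the noise acts only on the $\eta$-block while $L$ reads off the $\xi$-block), I would compute $W^\intercal \Sigma W=2k_BT\begin{bmatrix}M_\infty & 0\\0 & 0\end{bmatrix}$, so that Condition \textbf{A}, after multiplying through by $\widehat{M}$ and $\widehat{M}^\intercal$, becomes the symmetric Lyapunov relation $\widehat{D}\widehat{Q}\widehat{M}+\widehat{M}\widehat{Q}\widehat{D}=-2\begin{bmatrix}M_\infty & 0\\0 & 0\end{bmatrix}$. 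I would then verify by direct substitution that the block-diagonal matrix $\widehat{Q}=\begin{bmatrix}M_0^{-1} & 0\\0 & -M_2^{-1}\end{bmatrix}$ solves it: here $\widehat{D}\widehat{Q}=\mathrm{diag}(I,-I)$, hence $\widehat{D}\widehat{Q}\widehat{M}=\begin{bmatrix}-M_\infty & M_0\\ -M_0 & 0\end{bmatrix}$, whose symmetric part is precisely $\begin{bmatrix}-M_\infty & 0\\0 & 0\end{bmatrix}$, the off-diagonal blocks cancelling exactly because $M_1=0$. Since $B=\widehat{M}^{-1}\widehat{D}$ is stable the Lyapunov solution is unique, so this is the stationary covariance, and it manifestly satisfies the reduced form of Condition \textbf{B} from the previous step. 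The second FDT therefore follows from Theorem~\ref{thm:2fdt}.

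I expect the main obstacle to be the Lyapunov computation above — specifically, confirming that the block-diagonal ansatz is genuinely consistent rather than merely convenient. The vanishing $M_1=0$ is what makes the off-diagonal blocks cancel, so I would check that step most carefully; it is also where the scalar-damping hypothesis $\Gamma=\gamma I$ is indispensable. Finally, I would note the well-definedness caveats: invertibility of $M_0$ (equivalently $\theta(0)$) is needed both for $\widehat{M}$ to be invertible and for the reduction of Condition \textbf{B}, while invertibility of $M_2$ is needed for the $(2,2)$ block of $\widehat{Q}$.
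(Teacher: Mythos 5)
Your proof is correct and takes essentially the same route as the paper: reduce to Condition {\bf B} via Theorem \ref{thm:2fdt}, use $\Gamma=\gamma I$ to get $M_1=0$, $V^\intercal L^\intercal=\left[\begin{smallmatrix}M_0\\ 0\end{smallmatrix}\right]$ and $W^\intercal QL^\intercal=W^\intercal R=\left[\begin{smallmatrix}-M_\infty\\ M_0\end{smallmatrix}\right]$, compute $W^\intercal\Sigma W=2k_BT\left[\begin{smallmatrix}M_\infty&0\\ 0&0\end{smallmatrix}\right]$, and obtain $\widehat{Q}=\left[\begin{smallmatrix}M_0^{-1}&0\\ 0&-M_2^{-1}\end{smallmatrix}\right]$ from the Lyapunov equation, exactly as the paper does. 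The only (harmless) differences are that you verify the Lyapunov solution by substitution plus a stability/uniqueness remark rather than solving it outright, and your restatement of Condition {\bf B} as $\widehat{Q}\left[\begin{smallmatrix}M_0\\ 0\end{smallmatrix}\right]=\left[\begin{smallmatrix}I\\ 0\end{smallmatrix}\right]$ anticipates the paper's Lemma \ref{tm: 7} and in fact corrects a typo in the paper's closing display, where $\widehat{M}\widehat{Q}V^\intercal L^\intercal$ equals $\left[\begin{smallmatrix}-M_\infty\\ M_0\end{smallmatrix}\right]$, not $\left[\begin{smallmatrix}M_0\\ 0\end{smallmatrix}\right]$.
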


\begin{proof}
We only need to justify \ref{eq: th2}. It is straightforward to show that,
 $$
 W^\intercal Q L^\intercal = 
  \left[
  \begin{array}{c}
  LD^{-1}\\L
  \end{array}
  \right]
  \left[
  \begin{array}{cc}
  A_{22}^{-1}&0\\0&I
  \end{array}
  \right]
  \left[
  \begin{array}{c}
  A_{21}\\0
  \end{array}
  \right] = W^\intercal R =  \left[
  \begin{array}{c}
  -M_{\infty}\\M_0
  \end{array}
  \right].
 $$
 With the choice of $V$,  we have
 $$V^\intercal L^\intercal =\left[\begin{array}{c}M_0\\M_1^\intercal\end{array}\right]
 =\left[\begin{array}{c}M_0\\0\end{array}\right].$$
 Notice $M_1=0$, which can be verified  by direct calculation.
Then by some direct calculations with  the representation of the covariance matrix, we have
$$
\wh M\wh \Sigma \wh M^\intercal = W^\intercal \Sigma W = \left[
\begin{array}{cc}
2k_BT M_{\infty} & 0\\0 &0
\end{array}
\right]
=- k_B T (\wh D \wh Q \wh M^\intercal + \wh M \wh Q \wh D^\intercal).
$$
Meanwhile, we have, 
$$
\wh M  =\left[ \begin{array}{cc} -M_{\infty}&  M_0\\M_0&0\end{array}\right],
$$
which gives,
$$
\wh M^{-1}  =\left[ \begin{array}{cc} 0&  M_0^{-1}\\M_0^{-1}&M_0^{-1}M_{\infty}M_0^{-1}\end{array}\right],\quad
{
\wh \Sigma=2k_BT\left[\begin{array}{cc} 0 & 0\\ 0& M_0^{-1}M_{\infty}M_0^{-1} \end{array} \right].
}
$$
Now we  solve the Lyapunov equation and we find that,
$$
 \wh Q = \left[\begin{array}{cc} M_0^{-1} & 0\\0&- M_2^{-1}\end{array}\right].
$$
With $\wh Q$ available,  it can be verified that 
$$
\wh M \wh Q V^\intercal L^\intercal = \left[\begin{array}{c}M_0\\0\end{array}\right] = W^\intercal Q L^\intercal,
$$
which is our condition {\bf B}, thus it completes the proof. 

\end{proof}

\subsection{Generalization to High Order Approximation ($n \ge 2$)}

Inspired by the previous choices, we consider
\begin{align}\label{eq: choice}
V=[R, DR, \dots, D^{n-1}R],\quad W=[D^{-\intercal}L^\intercal, L^\intercal,D^{\intercal} L^\intercal,\dots,(D^\intercal)^{n-2}L^\intercal],
\end{align} 
and apply Galerkin projection to the two subspaces generated by the columns of these two matrices.

 The  corresponding matrices $\wh{M}$, $\wh{D}$, $ B$ and $W^\intercal R$ are given by, respectively,
\begin{align}
\wh{M}=&\left[
\begin{array}{cccc}
-M_{\infty} &M_0&\dots&M_{n-2}\\
M_0&M_1&\dots &M_{n-1}\\
\vdots\\
M_{n-2}&\dots&&M_{2n-3}
\end{array}
\right],
\quad
\wh D=\left[
\begin{array}{cccc}
M_0&M_1&\dots&M_{n-1}\\
M_1&M_2&\dots&M_{n}\\
\vdots\\
M_{n-1}&\dots&&M_{2n-2}
\end{array}
\right],\\
 B =\wh M^{-1} \wh D=&
\left[
\begin{array}{ccccc}
0&0&\dots&0&B_{n-1}\\
I&0&\dots&0&B_{n-2}\\
0&I&\dots&0&\\
\vdots\\
0&0&\dots&I&B_0
\end{array}
\right],\quad
W^\intercal R = \left[\begin{array}{c}
-M_{\infty}\\M_0 \\ \vdots \\ M_{n-2}
\end{array}
\right].
\end{align}
Therefore, the approximate kernel under the Galerkin projection can be expressed as,
\begin{equation}\label{eq: whthetan}
\wh{\theta}_n(t)= [M_0 \quad M_1\quad \dots\quad M_{n-1} ] e^{ Bt}
\left[\begin{array}{c} I\\ 0\\\vdots\\0 \end{array}\right].
\end{equation}

Meanwhile, the high order approximate memory kernel from the moment matching procedure satisfies the $n^{th}$ order differential equation:
$$
\theta^{(n)}_n(t)=B_0 \theta^{(n-1)}_n(t)+B_1\theta^{(n-2)}_n(t)+\dots B_{n-1}\theta_n(t), \quad\theta_n(0)=M_0, \quad\dots,\quad\theta^{(n-1)}_n(0)=M_{n-1}.
$$

We first show that these approximate kernel functions are the same.

{\theorem  The function $\wh \theta_n(t)$ {\red in equation (\ref{eq: whthetan})} is equivalent to {\red the function $\theta_n(t)$ generated from moment matching procedure as described in section 4.1}. In particular, it also satisfies the initial-value problem,
$$
\wh\theta^{(n)}_n(t)=B_0 \wh\theta^{(n-1)}_n(t)+B_1\wh\theta^{(n-2)}_n(t)+\dots B_{n-1}\wh\theta_n(t), \quad\wh\theta_n(0)=M_0, \quad\dots,\quad\wh\theta^{(n-1)}_n(0)=M_{n-1}.
$$
\begin{proof}
Each $M_i$ is a $m$ by $m$ matrix, and the dimension of   $\wh \theta_n(t)$ is also $m\times m$.
For simpler notations, we will denote $[M_i\quad M_{i+1}\quad \dots \quad M_{i+n-1}]=G_i$.
If  we can show that
$$
G_0 B^n = B_0G_0 B^{n-1} + B_1G_0 B^{n-2} + \dots + B_{n-1} G_0,
$$
this will prove $\wh \theta_n(t)$ satisfies the same differential equation.
Notice that the recursive relation $$G_i  B=G_{i+1},\quad {\rm for}\quad i=0,\dots n-2,$$ comes straightforward since $\wh M  B = \wh D$.
Then it remains to check that
$$
G_{n-1} B = B_0 G_{n-1}+B_1 G_{n-2} + \dots +B_{n-1}G_0.
$$
We will take a closer look at each block elements. The first block on the left hand side is $M_n$, and on the right hand side, we have $\sum_{i=0}^{n-1}M_iB_{n-1-i}$. They are equal  due to the equation $\wh M  B = \wh D$. In fact, all other blocks except the last one can be shown from the same equation.
 The last block automatically equal to each other since they have exactly the same representation. 
 
 For the initial conditions, they can be easily verified using  $\wh M  B = \wh D$.
\end{proof}
}
What we will study next is whether this formulation also obeys the second FDT. However, we are not able to prove the general case due to the lengthy calculations involved. We are able to prove the consistency for  $n \le 5$. The following few results are useful for the verification. Numerical tests suggest that the consistency holds also for higher order cases.

\begin{lemma} \label{lem: symmetric}
The moments of the memory function are all symmetric matrices. As a result, $\wh M$, and $\wh D$ {\red as defined in equation (\ref{eq: MD})} are also symmetric matrices. 
\end{lemma}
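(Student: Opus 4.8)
The plan is to reduce the entire statement to the single claim that each moment $M_\ell = LD^\ell R$, together with $M_\infty = -LD^{-1}R$, is a symmetric matrix; the assertions about $\wh M$ and $\wh D$ will then fall out of their block structure. First I would record the simplifications afforded by the standing assumption $\Gamma=\gamma I$ of Section 4. Since $\Phi$ and $\Psi$ span orthogonal complementary subspaces, $\Gamma_{12}=\gamma\,\Phi^\intercal\Psi=0$ and $\Gamma_{21}=\gamma\,\Psi^\intercal\Phi=0$, while $\Gamma_{22}=\gamma I$. Hence $L=[A_{12},\,0]$, $R=[A_{22}^{-1}A_{21},\,0]^\intercal$, and $D$ has blocks $0,\,I,\,-A_{22},\,-\gamma I$. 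Moreover $A$ is the (symmetric, positive definite) inverse covariance $k_BTA^{-1}=\langle x x^\intercal\rangle$, so $A_{22}^\intercal=A_{22}$ and $A_{21}=A_{12}^\intercal$. The payoff of writing things this way is that, because $L$ and $R$ are supported only in the ``position'' block, one gets the clean expression $M_\ell = A_{12}\,[D^\ell]_{11}\,A_{22}^{-1}A_{21}$, where $[D^\ell]_{11}$ denotes the top-left block of $D^\ell$.

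The crux is to show that $P:=[D^\ell]_{11}$ is symmetric and commutes with $A_{22}$. I would argue this through a commutative-algebra observation: every block of $D$ lies in the commutative algebra $\mathcal{A}$ of real polynomials in $A_{22}$, every element of $\mathcal{A}$ is symmetric (as $A_{22}$ is) and any two such elements commute. Since multiplying $2\times 2$ block matrices whose blocks lie in a commutative ring produces blocks in the same ring, a short induction on $\ell$ shows that all four blocks of every power $D^\ell$ lie in $\mathcal{A}$; by Cayley--Hamilton $A_{22}^{-1}\in\mathcal{A}$ as well, so the same is true of $D^{-1}$, which handles $M_\infty$. Consequently $P$ is symmetric and commutes with $A_{22}^{-1}$, and symmetry of $M_\ell$ is then a one-line transpose computation using $A_{21}=A_{12}^\intercal$:
\[
M_\ell^\intercal = A_{21}^\intercal\,(A_{22}^{-1})^\intercal\,P^\intercal\,A_{12}^\intercal = A_{12}\,A_{22}^{-1}P\,A_{21} = A_{12}\,P A_{22}^{-1}\,A_{21} = M_\ell,
\]
with the identical manipulation disposing of $M_\infty$. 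For the ``as a result'' clause I would simply inspect the explicit forms of $\wh M$ and $\wh D$ from Section 4.3: both are block-Hankel, i.e. the $(i,j)$ block depends only on $i+j$ (apart from the single corner entry $-M_\infty$ in $\wh M$). Because each $M_k$ and $M_\infty$ is symmetric, the $(i,j)$ block equals the transpose of the $(j,i)$ block, so $\wh M$ and $\wh D$ are symmetric.

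The only genuine subtlety, and the step I would guard most carefully, is that $D$ itself is \emph{not} symmetric, so one cannot naively transpose $LD^\ell R$ into itself; indeed $Q^{-1}D$ is only symmetric-plus-skew. The resolution hinges on two facts acting together: the sparsity of $L$ and $R$ (they see only the position block, which is why just $[D^\ell]_{11}$ ever enters), and the commutative-algebra structure that forces that block to be a symmetric polynomial in $A_{22}$. As a sanity check and an alternative route, the same conclusion can be phrased at the level of the kernel itself: $[e^{Dt}]_{11}$ is an analytic matrix function of $A_{22}$, so $\theta(t)=A_{12}[e^{Dt}]_{11}A_{22}^{-1}A_{21}$ is symmetric for every $t$, whence all $M_\ell=\theta^{(\ell)}(0)$ and $M_\infty=\int_0^\infty\theta(t)\,dt$ inherit symmetry; the block computation above is just the discrete shadow of this observation.
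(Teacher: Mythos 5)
Your proof is correct, but it is a genuinely different argument from the paper's. The paper proves symmetry by factoring $D$ into a product of two \emph{symmetric} matrices,
\[
D=\left[\begin{array}{cc}0&-I\\-I&\Gamma_{22}\end{array}\right]\left[\begin{array}{cc}A_{22}&0\\0&-I\end{array}\right],
\qquad
R=\left[\begin{array}{cc}A_{22}^{-1}&0\\0&-I\end{array}\right]\left[\begin{array}{c}A_{21}\\\Gamma_{21}\end{array}\right]
=\left[\begin{array}{cc}A_{22}^{-1}&0\\0&-I\end{array}\right]L^{\intercal},
\]
so that each $M_i=LD^iR$ collapses to a palindromic sandwich $L\,S\,(TS)^{i-1}L^{\intercal}$ with $S,T$ symmetric, which is symmetric by inspection; $M_0$ and $M_\infty$ are checked directly. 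Note the paper's proof never uses $\Gamma=\gamma I$: it keeps $\Gamma_{12},\Gamma_{21},\Gamma_{22}$ in play and needs only symmetry of $A$ and $\Gamma$, so the lemma survives for a general symmetric damping matrix — which matters, since the conclusion flags non-scalar $\Gamma$ as an open case where Conditions A and B still apply. Your commutative-algebra route, by contrast, leans essentially on the standing assumption $\Gamma=\gamma I$: killing $\Gamma_{12},\Gamma_{21}$ is what reduces $M_\ell$ to $A_{12}[D^\ell]_{11}A_{22}^{-1}A_{21}$, and the induction showing all blocks of $D^\ell$ lie in the polynomial algebra generated by $A_{22}$ fails the moment $\Gamma_{22}$ does not commute with $A_{22}$. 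Within Section 4's setting this is perfectly legitimate, and your approach buys something the paper's does not: the commutation structure you establish is exactly what underlies the paper's Lemma 4 (the representation $M_i=\sum_k c_{i,k}A_{12}A_{22}^kA_{21}$, proved separately in the appendix via a Neumann-series expansion), which you get as a byproduct, and your closing remark upgrades the conclusion to symmetry of $\theta(t)$ for all $t$, not just its derivatives at zero. Your treatment of the ``as a result'' clause — block-Hankel structure of $\wh M$ (with $M_{-1}:=-M_\infty$) and $\wh D$ plus symmetry of the blocks — is the same tacit step the paper takes. In short: correct, less general in the hypothesis on $\Gamma$, but richer in its conclusions under that hypothesis.
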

\begin{proof}
 We only need to show all moments $M_i$ are symmetric. 
Recall that 
$$
D=\left[\begin{array}{cc} 0 &I \\ -A_{22} & -\Gamma_{22}\end{array} = 
\right]
=\left[\begin{array}{cc}
0&-I\\-I&\Gamma_{22}
\end{array}
\right]\left[\begin{array}{cc}
A_{22}&0\\0&-I
\end{array}
\right],
$$
and 
$$
R=\left[\begin{array}{c}
A_{22}^{-1} A_{21}\\-\Gamma_{21}
\end{array}
\right]=\left[\begin{array}{cc}
A_{22}^{-1}&0\\0&-I
\end{array}
\right]\left[\begin{array}{c}
 A_{21}\\\Gamma_{21}
\end{array}
\right].
$$
Therefore for $i>0$,
\begin{align*}
M_i=LD^iR =& 
\begin{array}{cc}
[A_{12} & \Gamma_{12}]
\end{array}
\left[\begin{array}{cc}
0&-I\\-I&\Gamma_{22}
\end{array}
\right]
\left[\begin{array}{cc}
A_{22}&0\\0&-I
\end{array}
\right]\cdots
\left[\begin{array}{cc}
0&-I\\
-I&\Gamma_{22}
\end{array}
\right]
\left[\begin{array}{cc}
A_{22}&0\\0&-I
\end{array}
\right]
\left[\begin{array}{cc}
A_{22}^{-1}&0\\0&-I
\end{array}
\right]
\left[\begin{array}{c}
 A_{21}\\\Gamma_{21}
\end{array}
\right] \\
= &
\begin{array}{cc}
 [A_{12} & \Gamma_{12}]
\end{array}
\left[\begin{array}{cc}
0&-I\\-I&\Gamma_{22}
\end{array}
\right]
\left[\begin{array}{cc}
A_{22}&0\\0&-I
\end{array}
\right]\cdots
\left[\begin{array}{cc}
0&-I\\
-I&\Gamma_{22}
\end{array}
\right]
\left[
\begin{array}{c}
 A_{21}\\ 
 \Gamma_{21}
\end{array}
\right],
\end{align*}
which is clearly symmetric.
At the same time, it is straightforward to see that $M_0$ is symmetric by direct calculation. Finally, 
\begin{align*}
M_{\infty}&=\begin{array}{cc}[A_{12} & \Gamma_{12}]\end{array}
\left[
\begin{array}{cc}
-A_{22}^{-1}\Gamma_{22} & -A_{22}^{-1}\\
I&0
\end{array}
\right]\left[\begin{array}{cc}
A_{22}^{-1}&0\\0&-I
\end{array}
\right]\left[\begin{array}{c}
 A_{21}\\\Gamma_{21}
\end{array}
\right]\\
&=\begin{array}{cc}[A_{12} & \Gamma_{12}]\end{array}
\left[
\begin{array}{cc}
-A_{22}^{-1}\Gamma_{22}A_{22}^{-1} & A_{22}^{-1}\\
A_{22}^{-1}&0
\end{array}
\right]
\left[\begin{array}{c}
 A_{21}\\\Gamma_{21}
\end{array}
\right],
\end{align*}
 is symmetric as well.
\end{proof}

\begin{lemma}\label{tm: 7}
Assume that $\wh M$ is invertible. 
Condition {\bf B} is equivalent to, 
\begin{align}\label{eq: lemma}
\wh Q \left[\begin{array}{c}
M_0\\M_1\\\vdots\\M_{n-1}
\end{array}
\right]=\left[\begin{array}{c}
I\\0\\\vdots\\0
\end{array}
\right].
\end{align}
\end{lemma}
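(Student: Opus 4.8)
The plan is to reduce Condition {\bf B}, namely $\wh M \wh Q\, V^\intercal L^\intercal = W^\intercal Q L^\intercal$, to the asserted identity \eqref{eq: lemma} by evaluating each of the three objects $V^\intercal L^\intercal$, $W^\intercal Q L^\intercal$, and the right-hand side in terms of the moments $M_i$, and then cancelling the invertible matrix $\wh M$. Throughout I would use the explicit choices $V=[R,DR,\dots,D^{n-1}R]$ and $W=[D^{-\intercal}L^\intercal, L^\intercal,\dots,(D^\intercal)^{n-2}L^\intercal]$ from \eqref{eq: choice}, together with Lemma \ref{lem: symmetric}.

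First I would compute $V^\intercal L^\intercal$. Its $i$-th block is $R^\intercal (D^\intercal)^{i}L^\intercal = (L D^{i} R)^\intercal = M_i^\intercal$, which by Lemma \ref{lem: symmetric} equals $M_i$; hence $V^\intercal L^\intercal$ is precisely the column stack $[M_0; M_1; \dots; M_{n-1}]$ appearing on the left of \eqref{eq: lemma}. Next I would simplify the right-hand side $W^\intercal Q L^\intercal$. Because we are in the constant-damping regime $\Gamma=\gamma I$, the orthogonality of $\Phi$ and $\Psi$ forces $\Gamma_{21}=0$, so the FDT relation \eqref{eq: fdtcond} collapses to $QL^\intercal = R$. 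Thus $W^\intercal Q L^\intercal = W^\intercal R$, whose $i$-th block is $L D^{i-1} R$; reading off $L D^{-1} R = -M_\infty$ and $L D^{i}R = M_i$ gives $W^\intercal R = [-M_\infty; M_0; \dots; M_{n-2}]$.

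The crucial observation — the one step worth isolating — is that this column is exactly the first block-column of $\wh M$, that is, $W^\intercal R = \wh M\,[I; 0; \dots; 0]$. With this in hand I would assemble the pieces: Condition {\bf B} becomes $\wh M \wh Q\, [M_0; \dots; M_{n-1}] = \wh M\,[I; 0; \dots; 0]$, and since $\wh M$ is assumed invertible I may left-multiply both sides by $\wh M^{-1}$ to obtain \eqref{eq: lemma}. Each manipulation is reversible, so the two statements are genuinely equivalent.

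I do not expect a real obstacle here: the argument is bookkeeping rather than analysis. The only places demanding care are invoking Lemma \ref{lem: symmetric} to turn $V^\intercal L^\intercal$ into a clean stack of moments, and recognizing the first-block-column identity $W^\intercal R = \wh M[I;0;\dots;0]$ that enables the cancellation. The invertibility hypothesis on $\wh M$ is precisely what upgrades a one-way implication into the claimed equivalence, so I would state it explicitly where the cancellation is performed.
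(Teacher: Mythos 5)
Your proposal is correct and follows essentially the same route as the paper's proof: both reduce $W^\intercal Q L^\intercal$ to $W^\intercal R$ via $QL^\intercal = R$ (valid since $\Gamma=\gamma I$ makes $\Gamma_{12}=\Gamma_{21}=0$), use Lemma~\ref{lem: symmetric} to identify $V^\intercal L^\intercal$ with the stack $[M_0;\dots;M_{n-1}]$, and cancel the invertible $\wh M$. Your only addition is making explicit the identity $W^\intercal R=\wh M\,[I;0;\dots;0]$, which the paper leaves implicit in the final multiplication by $\wh M^{-1}$.
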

\begin{proof}

When $\Gamma=\gamma I$, $\Gamma_{12}=0$, we have the identity $QL^\intercal =R$. It is also easy to see that due to symmetry from Lemma 1, one has,  
$$
V^\intercal L^\intercal = \left[\begin{array}{c}
M_0^\intercal \\M_1^\intercal \\\dots\\M_{n-1}^\intercal
\end{array}
\right]= \left[\begin{array}{c}
M_0\\M_1\\\dots\\M_{n-1}
\end{array}
\right].
$$
Therefore, \ref{eq: th2} becomes
$$
\wh M\wh Q \left[\begin{array}{c}
M_0\\M_1\\\dots\\M_{n-1}
\end{array}
\right]= W^\intercal R=
\left[\begin{array}{c}
-M_\infty\\M_0\\\dots\\M_{n-2}
\end{array}
\right].
$$
Multiplying both sides by $\wh M^{-1}$ (with the assumption that $\wh M$ is invertible), we arrive at equation (\ref{eq: lemma}). 
\end{proof}

\begin{lemma}
Let $\wt \Sigma = W^\intercal \Sigma W$, which has dimension $nm \times nm$. If it is partitioned into a block matrix with each block having dimension $m\times m$ , then the  block elements have the following recurrence relations:
\begin{align}
&\wt{\Sigma}_{1,1} = 2M_{\infty},\quad \wt{\Sigma}_{i,2}=0, \quad \wt{\Sigma}_{2,i}=0,\\
&\wt{\Sigma}_{ij} = -\frac{1}{\gamma} \wt\Sigma_{i+1,j}- \frac{1}{\gamma} \wt\Sigma_{i,j+1}-2k_BT M_{i+j-3}, \quad i,j >2.
\end{align}
\end{lemma}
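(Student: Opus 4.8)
The plan is to write every block of $\wt\Sigma$ explicitly from the definition of $W$ in \eqref{eq: choice}, and then to exploit the symmetric factorization of $D$ already established in the proof of Lemma~\ref{lem: symmetric}. Since the $i$-th block column of $W$ is $(D^\intercal)^{i-2}L^\intercal$, we have
\[
\wt\Sigma_{ij}=W_i^\intercal\Sigma W_j = L\,D^{\,i-2}\,\Sigma\,(D^\intercal)^{\,j-2}\,L^\intercal .
\]
Throughout I would record the simplifications that come from $\Gamma=\gamma I$: $\Gamma_{12}=\Gamma_{21}=0$ and $\Gamma_{22}=\gamma I$, so that $L=[A_{12},\,0]$, the vector $L^\intercal=[A_{21},\,0]^\intercal$ has a vanishing momentum block, $\Sigma=\mathrm{diag}(0,\,2k_BT\gamma I)$, $QL^\intercal=R$, and $D=EF$ with the symmetric factors $E=\left[\begin{smallmatrix}0&-I\\-I&\gamma I\end{smallmatrix}\right]$, $F=\left[\begin{smallmatrix}A_{22}&0\\0&-I\end{smallmatrix}\right]$, together with $L=\mathbf r^\intercal$, $R=F^{-1}\mathbf r$, $\mathbf r=[A_{21},\,0]^\intercal$, exactly as in the proof of Lemma~\ref{lem: symmetric}.

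The two boundary identities are immediate. For the second block column, $\Sigma L^\intercal=0$, because $L^\intercal$ has zero momentum block while $\Sigma$ is supported on that block; hence $\wt\Sigma_{i,2}=W_i^\intercal\Sigma L^\intercal=0$ for every $i$, and since $\wt\Sigma$ is symmetric ($\Sigma$ is), $\wt\Sigma_{2,i}=\wt\Sigma_{i,2}^\intercal=0$. For the corner block I would substitute the Lyapunov relation \eqref{eq: Lya}, $\Sigma=-k_BT(DQ+QD^\intercal)$, into $\wt\Sigma_{1,1}=LD^{-1}\Sigma D^{-\intercal}L^\intercal$; using $QL^\intercal=R$ and $LD^{-1}R=-M_\infty$ together with the symmetry of $M_\infty$ from Lemma~\ref{lem: symmetric}, both resulting terms equal $-M_\infty$, giving $\wt\Sigma_{1,1}=2k_BT M_\infty$ (the stated $2M_\infty$, carrying the $k_BT$ prefactor consistently with the recurrence).

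For the recurrence I would first clear the denominator: for $i,j>2$ the claim is equivalent to
\[
\gamma\,\wt\Sigma_{ij}+\wt\Sigma_{i+1,j}+\wt\Sigma_{i,j+1}=-2k_BT\gamma\,M_{i+j-3},
\]
and, factoring out the common $L D^{i-2}(\cdot)(D^\intercal)^{j-2}L^\intercal$, this is exactly $L D^{i-2}H(D^\intercal)^{j-2}L^\intercal=-2k_BT\gamma M_{i+j-3}$ with $H:=\gamma\Sigma+D\Sigma+\Sigma D^\intercal$. The crux is a one-line block computation showing $H=-2k_BT\gamma E$, i.e. the combination $\gamma\Sigma+D\Sigma+\Sigma D^\intercal$ reproduces, up to the scalar $-2k_BT\gamma$, precisely the symmetric left factor $E$ of $D$. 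Writing $a=i-2\ge1$ and $b=j-2\ge1$ and using $D=EF$, $D^\intercal=FE$, $L=\mathbf r^\intercal$, I would then collapse the operator string by the shuffle identity $(EF)^aE(FE)^b=E(FE)^{a+b}=E(D^\intercal)^{a+b}$, so that
\[
L D^{a}H(D^\intercal)^{b}L^\intercal=-2k_BT\gamma\,\mathbf r^\intercal E(D^\intercal)^{a+b}\mathbf r=-2k_BT\gamma\,M_{a+b+1},
\]
where the last equality is the factored moment formula $M_k=\mathbf r^\intercal E(D^\intercal)^{k-1}\mathbf r$ (valid for $k\ge1$, here $k=a+b+1\ge3$). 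Since $a+b+1=i+j-3$, this is the desired recurrence.

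The main obstacle, and really the only non-routine step, is spotting the identity $\gamma\Sigma+D\Sigma+\Sigma D^\intercal=-2k_BT\gamma E$: it is what converts the otherwise unstructured middle factor $\Sigma$ into the factor $E$ that shuffles cleanly through the $D$-powers. Once that is in hand, the identity $(EF)^aE(FE)^b=E(D^\intercal)^{a+b}$ and the factored moment formula from Lemma~\ref{lem: symmetric} finish the argument mechanically; the restriction $i,j>2$ (hence $a,b\ge1$) is exactly what guarantees the index $a+b+1\ge1$ required by that moment formula.
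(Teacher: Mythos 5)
Your proof is correct, and it reaches the recurrence by a route that is organized differently from the paper's own proof in Appendix A. The paper substitutes the Lyapunov equation \eqref{eq: Lya} into each block $\wt\Sigma_{ij}=LD^{i-2}\Sigma(D^\intercal)^{j-2}L^\intercal$ and then splits $Q=S+\tfrac{1}{\gamma k_BT}\Sigma$ with $S=\mathrm{diag}(A_{22}^{-1},-I)$, exploiting $SD^\intercal=DS$ and $SL^\intercal=R$: the two $S$-terms each telescope to $-k_BT\,LD^{i+j-3}R=-k_BT M_{i+j-3}$, while the two $\Sigma$-terms reproduce the shifted blocks $\wt\Sigma_{i+1,j}$ and $\wt\Sigma_{i,j+1}$. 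You instead clear denominators and verify the single middle-factor identity $\gamma\Sigma+D\Sigma+\Sigma D^\intercal=-2k_BT\gamma E$ (which I checked: the bottom-right entries $2k_BT\gamma^2-2k_BT\gamma^2-2k_BT\gamma^2$ combine correctly), then collapse the operator string via $D=EF$, the shuffle $(EF)^aE(FE)^b=E(D^\intercal)^{a+b}$, and the factored moment formula $M_k=\mathbf r^\intercal E(D^\intercal)^{k-1}\mathbf r$ from the proof of Lemma \ref{lem: symmetric}. The two arguments are equivalent at heart --- note $S=F^{-1}$ and $DS=E$, so your identity is precisely the paper's Lyapunov substitution and $Q$-splitting assembled into one matrix equation --- but your packaging buys two things: the entire cancellation is isolated in one directly checkable $2\times 2$ block identity (and in doing so you sidestep a factor-of-two typo in the paper's printed splitting, which states $Q-S=\tfrac{1}{2\gamma k_BT}\Sigma$ although $Q-S=\mathrm{diag}(0,2I)=\tfrac{1}{\gamma k_BT}\Sigma$; the paper's final lines nonetheless use the correct factor), and you supply an actual derivation of the corner block $\wt\Sigma_{11}=2k_BT M_\infty$ via \eqref{eq: Lya}, $QL^\intercal=R$ and $LD^{-1}R=-M_\infty$, where the paper merely asserts it. Your boundary argument $\Sigma L^\intercal=0$ for the second row and column coincides with the paper's. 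One trivial slip at the end: with $a,b\ge 1$ you get $a+b+1\ge 3$, not merely $\ge 1$; the binding requirement is only $a,b\ge 0$ (so that no inverse powers of $D$ intrude) together with $k\ge 1$ in the moment formula, and in fact your computation already covers $i,j\ge 2$, slightly more than the lemma claims.
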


As a result, the elements of $\wt \Sigma$ can be constructed column by column using the recurrence relation. They can be expressed in terms of the moments $M_i$s. The next lemma shows that the moments also exhibit a recurrence relation, which can be exploited to make the calculation a bit easier.

\begin{lemma}
The moments $M_i=LD^iR$ can be written as a linear combination of matrices $A_{12}A_{22}^kA_{21}$,
$$
M_i = \sum_{k=0}^{\lfloor \frac{i}{2}\rfloor-1} c_{i,k} A_{12} A_{22}^k A_{21}.
$$
\end{lemma}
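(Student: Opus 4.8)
The plan is to exploit the special structure that the assumption $\Gamma=\gamma I$ imposes on $L$, $R$ and $D$, and then to collapse powers of $D$ into a scalar three-term recurrence whose coefficients are polynomials in $A_{22}$. First I would record that $\Gamma=\gamma I$ forces $\Gamma_{12}=\Phi^\intercal\Gamma\Psi=0$, $\Gamma_{21}=\Psi^\intercal\Gamma\Phi=0$ and $\Gamma_{22}=\gamma I$, so that $L=[A_{12},\,0]$ and $R=[A_{22}^{-1}A_{21},\,0]^\intercal$. Because only the ``position'' block survives in both $L$ and $R$, the moment collapses onto the top-left block of the matrix power:
$$
M_i=LD^iR=A_{12}\,(D^i)_{11}\,A_{22}^{-1}A_{21},
$$
so the entire problem reduces to understanding the $(1,1)$ block $(D^i)_{11}$.

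The key algebraic observation I would establish is a Cayley--Hamilton-type identity for the companion-like matrix $D$ of \eqref{eq: Gmat}: a direct computation gives $D^2+\gamma D+\mathrm{diag}(A_{22},A_{22})=0$, and this block-diagonal matrix $\mathrm{diag}(A_{22},A_{22})$ commutes with $D$. Consequently $D^i=-\gamma D^{i-1}-\mathrm{diag}(A_{22},A_{22})\,D^{i-2}$ for $i\ge 2$. Taking the $(1,1)$ block and writing $P_i:=(D^i)_{11}$ yields the scalar recurrence $P_i=-\gamma P_{i-1}-A_{22}P_{i-2}$ with $P_0=I$ and $P_1=0$. An easy induction then shows that each $P_i$ is a polynomial in $A_{22}$ with scalar ($\gamma$-dependent) coefficients, of degree at most $\lfloor i/2\rfloor$, and---crucially---with vanishing constant term for $i\ge 2$, since $P_i(0)=-\gamma P_{i-1}(0)$ and $P_2(0)=0$.

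With this in hand the conclusion is immediate: because $P_i$ has $A_{22}$ as a factor for $i\ge 2$, the product $Q_i:=P_iA_{22}^{-1}$ is again a polynomial in $A_{22}$, now of degree at most $\lfloor i/2\rfloor-1$, and $M_i=A_{12}Q_i(A_{22})A_{21}=\sum_{k=0}^{\lfloor i/2\rfloor-1}c_{i,k}A_{12}A_{22}^kA_{21}$, which is exactly the claimed form. If explicit coefficients are wanted, they can be generated from the induced recurrence $Q_i=-\gamma Q_{i-1}-A_{22}Q_{i-2}$ with base cases $Q_2=-I$ and $Q_3=\gamma I$.

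The step I expect to be most delicate is the degree bookkeeping together with the edge cases. The heart of the argument---the quadratic relation for $D$ and the commutativity of $\mathrm{diag}(A_{22},A_{22})$ with $D$---is a short verification; but one must check carefully that exactly one power of $A_{22}$ factors out, dropping the degree from $\lfloor i/2\rfloor$ to $\lfloor i/2\rfloor-1$, and this hinges precisely on the vanishing of the constant term of $P_i$ for $i\ge 2$. I would also flag that the identity holds for $i\ge 1$ only: for $i=1$ the empty sum correctly returns $M_1=0$, but $i=0$ is genuinely exceptional, since $M_0=A_{12}A_{22}^{-1}A_{21}$ carries a negative power of $A_{22}$ and cannot be written in the stated form.
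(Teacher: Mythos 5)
Your proof is correct, but it takes a genuinely different route from the paper's. The paper works in the Laplace domain: it writes the transform $\Theta$ via $L\big(\lambda^{-1}I-D\big)^{-1}R$, uses a block inversion formula together with the very fact you isolate (that $\Gamma_{12}=\Gamma_{21}=0$ kills the second blocks of $L$ and $R$), reduces everything to the $(1,1)$ resolvent block $\big(I+\lambda^{2}(1+\lambda\gamma)^{-1}A_{22}\big)^{-1}$, and then expands in a Neumann series, concluding that ``the patterns in the representation of the $M_i$'s can be observed.'' You instead work in the time domain: the quadratic relation $D^{2}+\gamma D+\operatorname{diag}(A_{22},A_{22})=0$ (which indeed checks out blockwise, and $\operatorname{diag}(A_{22},A_{22})$ does commute with $D$) yields the three-term recurrence $P_i=-\gamma P_{i-1}-A_{22}P_{i-2}$ for the $(1,1)$ blocks, with $P_0=I$, $P_1=0$, and induction does the rest. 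These are two faces of the same structure---the denominator $(1+\gamma\lambda)I+\lambda^{2}A_{22}$ in the paper's resolvent is precisely the generating-function form of your recurrence---but your version buys full rigor where the paper's series expansion is an informal sketch: the degree bound $\lfloor i/2\rfloor-1$ and the divisibility of $P_i$ by $A_{22}$ (vanishing constant term for $i\ge 1$) come out of a clean induction, and the induced recurrence $Q_i=-\gamma Q_{i-1}-A_{22}Q_{i-2}$ with $Q_2=-I$, $Q_3=\gamma I$ generates the coefficients $c_{i,k}$ explicitly; one can verify it reproduces the paper's table (e.g.\ $P_4=A_{22}^{2}-\gamma^{2}A_{22}$ gives $M_4=A_{12}A_{22}A_{21}-\gamma^{2}A_{12}A_{21}$) and even the appendix identities such as $\gamma M_2+M_3=0$. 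Your edge-case flag is also well taken and consistent with the paper's own list of moments: the stated form holds for $i\ge 1$ (the empty sum correctly returns $M_1=0$), while $M_0=A_{12}A_{22}^{-1}A_{21}$ carries the exceptional power $k=-1$, a restriction the lemma leaves implicit.
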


The proof of these lemmas can be found in the appendix.

{
\theorem {\red The reduced system  \eqref{eq: GLE'}} from the  Petrov-Galerkin projection obeys the second FDT for orders  { $n\leq 5$}.
}
\begin{proof}
It now becomes clear that in order to check wether  the second FDT holds for high order approximation system, one only needs to show equation (\ref{eq: lemma}). On the other hand, we know $\wh Q$ is the solution to Lyapunov equation (\ref{eq: cond-1}), which uniquely determined. Therefore under the assumption that $\wh D$ is nonsingular, $\wt Q=\wh Q \wh D$ is also uniquely determined. This also leads to the following equation based on the fact that $\wh M$ is symmetric.
$$
k_B T (\wt Q\wh M + \wh M \wt Q^\intercal) = -\wh M \wh \Sigma \wh M = -W^\intercal \Sigma W=-\wt {\Sigma}.
$$
Now the goal is to compute the exact form of $\wt Q$. We will present the expression of $\wt Q$ and $W^\intercal \Sigma W$ for $n=3, 4, 5$, and readers can substitute those forms into the equation above to verify. There are some identities needed in order to complete the verification, which we will present in the Appendix.

For $n=3$,
\begin{align*}
\wt \Sigma= k_B T\left[
\begin{array}{ccc}
2M_{\infty} &0 & -2\gamma M_0\\
0&0&0\\
-2\gamma M_0&0&-2\gamma M_2
\end{array}
\right],
\quad
\wt Q=\left[\begin{array}{ccc}
I & 0 &0\\
0&-I &0\\
0&2\gamma I&I
\end{array}
\right].
\end{align*}

For $n=4$,
\begin{align*}
\wt \Sigma = k_B T\left[
\begin{array}{cccc}
2M_{\infty} &0 & -2\gamma M_0& 2\gamma^2 M_0\\
0&0&0&0\\
-2\gamma M_0&0&-2\gamma M_2& -2\gamma M_3\\
2\gamma^2M_0&0 & -2\gamma M_3&2\gamma^2M_3\\
\end{array}
\right],
\quad
\wt Q=\left[\begin{array}{cccc}
I & 0 &0 &0\\
0&-I &0&0\\
0&2\gamma I&I&0\\
0 & -2\gamma^2 I &-2\gamma I&-I 
\end{array}
\right].
\end{align*}

For $n=5$,
\begin{align*}
&\wt \Sigma = k_B T\left[
\begin{array}{ccccc}
2M_{\infty} &0 & -2\gamma M_0& 2\gamma^2 M_0&-2\gamma^3 M_0 - 2\gamma M_2\\
0&0&0&0&0\\
-2\gamma M_0&0&-2\gamma M_2& -2\gamma M_3&2\gamma^2M_3\\
2\gamma^2M_0&0 & -2\gamma M_3&2\gamma^2M_3 &2 \gamma^2 M_4\\
-2\gamma^3 M_0 - 2 \gamma M_2 &0&-2\gamma M_4&2\gamma^2M_4 & -2\gamma^3 M_4 - 2\gamma^2 M_5 - 2\gamma M_6
\end{array}
\right],
\\
&\wt Q=\left[\begin{array}{ccccc}
I & 0 &0 &0&0\\
0&-I &0&0&0\\
0&2\gamma I&I&0&0\\
0 & -2\gamma^2 I &-2\gamma I&-I &0\\
0 & 2\gamma^3 I& 4\gamma^2 I& 4\gamma I&I
\end{array}
\right].
\end{align*}

\end{proof}

\section{Numerical Implementation}

In this section, we will describe  the numerical implementation of the Krylov subspace projection method.
In the previous section, we have studied properties of the projected dynamics with particular choices of   $V$ and $W$. 
However,  as is well known \cite{bai2002krylov}, a direct implementation using those matrices often leads to ill-conditioned matrices. This has clearly been shown in Table 1. 
A much more robust approach is to obtain orthogonal basis by using appropriate orthogonalization algorithms.

Let us first introduce the notations for these two Krylov subspaces for an $n$th order approximation. 
$$
\mathcal K_n(D, R) = span\{R, DR, \dots, D^{n-1} R\}, \;\; \mathcal K_n(D^\intercal,D^{-\intercal} L^\intercal)=span\{D^{-\intercal}L^\intercal, L^\intercal, \dots, (D^{n-2})^\intercal L^\intercal\}.
$$

\subsection{Block Lanczos Algorithms (BLBIO)}
 We will adopt  the non-symmetric block Lanczos algorithms from  \cite{loher2006reliable}
 to generate orthogonal basis $V=[ V_1, \dots, V_n]$ and $W=[ W_1, \dots, W_n]$ for $\mathcal K_n(D, R)$ and $\mathcal K_n(D^\intercal,D^{-\intercal} L^\intercal)$, respectively. 

The Lanczos algorithm proceeds as follows. 
Choose $V_1=R, W_1= D^{-\intercal} L^\intercal$, and let $\delta_1 = W_1^\intercal V_1$, and for $k=1, 2, \dots$ compute 
\begin{align}
&\delta_k^A = W_k ^\intercal D V_k,\\
&\alpha_k = \delta_k^{-1}\delta_k^A,\quad \wt {\alpha}_k=\delta_k^{-\intercal}(\delta_k^A)^{\intercal},\\
 &\beta_{k-1}=\delta_{k-1}^{-1}\wt \gamma_{k-1}^\intercal\delta_k,\quad \wt \beta_{k-1}=\delta_{k-1}^{-\intercal}\gamma_{k-1}^\intercal\delta_k^\intercal,\quad ({\rm if} \; n>0)\\
 &V_{tmp}=DV_k - V_k\alpha_k-V_{k-1}\beta_{k-1},\quad 
 W_{tmp}=D^\intercal W_k-W_k \wt  \alpha_k - W_{k-1}\wt  \beta_{k-1},\\
&\delta_{tmp}=W_{tmp}^\intercal V_{tmp}\\
&{\rm choose}\; \gamma_k, \wt  \gamma_k\; {\rm and} \;\delta_{k+1}, \quad{\rm s.t.}\quad 
\wt {\gamma}_k^\intercal \delta_{k+1} \gamma_k = \delta_{tmp}
\end{align}

Several possible choices have been recommended in  \cite{loher2006reliable} for $ \gamma_k, \wt  \gamma_k${and} $\delta_{k+1}$.  We found that the QR factorization with column pivoting for{ $V_{tmp}$ and $W_{tmp}$} is quite robust. Namely,  
$$
V_{tmp} P=UR,\quad W_{tmp} \wt P = \wt U \wt R.
$$
Then we choose 
$$
V_{k+1}=U, \quad W_{k+1}=\wt U,\quad \gamma_{k}=RP^\intercal,\quad \wt \gamma_{k}=\wt R \wt P^\intercal.
$$

By following this algorithm, we obtain the orthogonality properties among the basis vectors of the Krylov subspaces. In particular, the matrix $\wh M$ is diagonal, and the matrix $\wh D$ is block-tridiagonal. 
As a result, the SDEs for the auxiliary variable $ z$ (\ref{eq: projD}) involves sparse matrices.
\subsection{Implementation without $\Psi$}
The implementation of the algorithm requires the matrices $L$, $V$, $W$, and $R$,  all involving the $\Psi$ matrix as part of the construction. Constructing $\Psi$  is usually not feasible for large systems. Here we present an algorithm that does not involve   $\Psi$.

\smallskip

Let's first derive a few useful identities involving the $\Psi$ matrix.
We start with,
$$
\left[\begin{array}{c} \Phi^\intercal \\\Psi ^\intercal \end{array}\right] A [\Phi \quad\Psi]=
\left[\begin{array}{cc} A_{11}&A_{12}\\A_{21}&A_{22}\end{array}\right],
\quad\quad
\left(\left[\begin{array}{c} \Phi^\intercal \\\Psi ^\intercal \end{array}\right] A [\Phi \quad\Psi]\right)^{-1}
=\left[\begin{array}{c} \Phi^\intercal \\\Psi ^\intercal \end{array}\right] A^{-1} [\Phi\quad \Psi].
$$

Using a block inversion formula, we get
$$
(\Phi^\intercal A^{-1} \Phi)^{-1} = A_{11} - A_{12}A_{22}^{-1}A_{21}.
$$
By left multiplying the equation by $A^{-1}\Phi$, together with the identity $\Phi \Phi^\intercal = I - \Psi \Psi^\intercal$,  we find that,
\begin{align}\label{eq: a22a21}
A^{-1}\Phi(\Phi^\intercal A^{-1}\Phi)^{-1} = \Phi - \Psi A_{22}^{-1} A_{21}.
\end{align}
Next, right multiplying the above equation by $\Phi^\intercal A^{-1}$, we arrive at,
\begin{align}\label{eq: pa22p}
A^{-1}\Phi(\Phi^\intercal A^{-1}\Phi)^{-1} \Phi^\intercal A^{-1}= \Phi \Phi^\intercal A^{-1} - \Psi A_{22}^{-1} \Psi^\intercal A \Phi \Phi^{\intercal} A^{-1} = \Phi \Phi^\intercal A^{-1} - \Psi A_{22}^{-1} \Psi^\intercal+ (I-\Phi\Phi^{\intercal})A^{-1}.
\end{align}

Now we define, $$\wt D = \left[\begin{array}{cc}\Psi&0\\0&\Psi\end{array}\right] D \left[\begin{array}{cc}\Psi^\intercal&0\\0&\Psi^\intercal\end{array}\right], \quad\quad \wt R = \left[\begin{array}{cc}\Psi&0\\0&\Psi\end{array}\right] R, \quad \wt L =\Phi^\intercal \big[ A, \;\Gamma\big] \left[\begin{array}{cc}\Psi&0\\0&\Psi\end{array}\right]  D^{-1} \left[\begin{array}{cc}\Psi^\intercal &0\\0&\Psi^\intercal\end{array}\right]$$

We start with the following observation,
\begin{lemma} 
The following relation holds between the two Krylov subspaces, $ \mathcal{K}_n(D,R)$ and $\mathcal K_n(\wt D, \wt R)$:
$$\left[\begin{array}{cc}\Psi&0\\0&\Psi\end{array}\right] 
\mathcal{K}_n(D,R) = 
\mathcal K_n(\wt D, \wt R).$$
Similarly,
$$\left[\begin{array}{cc}\Psi&0\\0&\Psi\end{array}\right] \mathcal K_n (D^\intercal , D^{-\intercal}L^\intercal)
=\mathcal K_n( \wt D^{\intercal},\wt L^\intercal). $$
\end{lemma}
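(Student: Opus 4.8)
The plan is to reduce both identities to a single orthonormality relation for the fast-subspace basis, after which everything telescopes mechanically. Throughout I would write $\mathcal P = \left[\begin{array}{cc}\Psi & 0\\ 0 & \Psi\end{array}\right]$, so that by the definitions in the statement $\wt D = \mathcal P D \mathcal P^\intercal$ and $\wt R = \mathcal P R$. The cornerstone is the orthonormality of the basis $\{\psi_i\}$ of $Y^\perp$: since $[\Phi,\Psi]$ is orthogonal (equivalently $\Phi\Phi^\intercal + \Psi\Psi^\intercal = I$, the relation already used to derive \eqref{eq: a22a21}), one has $\Psi^\intercal\Psi = I$ and hence $\mathcal P^\intercal \mathcal P = I$. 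This one identity collapses every power of $\wt D$: a one-line induction gives $\wt D^{k} = \mathcal P D^{k}\mathcal P^\intercal$, since each interior factor $\mathcal P^\intercal \mathcal P$ is the identity, and transposing yields $(\wt D^\intercal)^{k} = \mathcal P (D^\intercal)^{k}\mathcal P^\intercal$.

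For the first relation I would compute the block Krylov generators directly. Using $\wt R = \mathcal P R$ together with $\wt D^{k} = \mathcal P D^{k}\mathcal P^\intercal$ and $\mathcal P^\intercal\mathcal P = I$, each generator satisfies $\wt D^{k}\wt R = \mathcal P D^{k}\mathcal P^\intercal \mathcal P R = \mathcal P D^{k} R$ for $k = 0,\dots,n-1$. Because a block Krylov subspace is the span of the columns of its generators, taking the column span over $k$ gives $\mathcal K_n(\wt D,\wt R) = \mathcal P\,\mathcal K_n(D,R)$, which is exactly the first claim.

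For the second relation the first step is to rewrite $\wt L$. Observing from the block definitions that $L = [A_{12},\Gamma_{12}] = [\Phi^\intercal A\Psi,\,\Phi^\intercal\Gamma\Psi] = \Phi^\intercal[A,\Gamma]\mathcal P$, the definition of $\wt L$ becomes $\wt L = L D^{-1}\mathcal P^\intercal$, whence $\wt L^\intercal = \mathcal P D^{-\intercal} L^\intercal$. In particular the seed vector of $\mathcal K_n(\wt D^\intercal,\wt L^\intercal)$ is precisely $\mathcal P$ applied to the seed $D^{-\intercal}L^\intercal$ of $\mathcal K_n(D^\intercal,D^{-\intercal}L^\intercal)$. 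Then, using $(\wt D^\intercal)^{k} = \mathcal P (D^\intercal)^{k}\mathcal P^\intercal$ and $\mathcal P^\intercal\mathcal P = I$ once more, I would compute $(\wt D^\intercal)^{k}\wt L^\intercal = \mathcal P(D^\intercal)^{k}D^{-\intercal}L^\intercal = \mathcal P(D^\intercal)^{k-1}L^\intercal = \mathcal P (D^\intercal)^{k}(D^{-\intercal}L^\intercal)$, so again the generators map through $\mathcal P$ and the two block Krylov subspaces coincide after applying $\mathcal P$.

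The computations are entirely mechanical once $\mathcal P^\intercal\mathcal P = I$ is in hand, so I do not expect a genuine analytic obstacle; the work is bookkeeping. The two points requiring care are verifying $L = \Phi^\intercal[A,\Gamma]\mathcal P$ (hence $\wt L = L D^{-1}\mathcal P^\intercal$) from the block definitions of $A_{12}$ and $\Gamma_{12}$, and ensuring the index shift in the transpose case is tracked correctly so that the factor $D^{-\intercal}$ in the seed cancels exactly one power of $D^\intercal$. The invertibility of $D$, guaranteed by that of $A_{22}$ and already implicit in the definition of $\wt L$, is what makes this cancellation legitimate.
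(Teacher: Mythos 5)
Your proof is correct; note that the paper itself states this lemma without proof (it is introduced only as an ``observation,'' and the appendix supplies proofs only for the lemmas on $\wt \Sigma$ and on the moments $M_i$), so there is no authorial argument to diverge from. Writing $\mathcal P$ for the block-diagonal matrix built from $\Psi$, your reduction of everything to $\mathcal P^\intercal \mathcal P = I$ (valid because the columns of $[\Phi\;\; \Psi]$ form an orthonormal basis, the same fact underlying the paper's identity $\Phi\Phi^\intercal+\Psi\Psi^\intercal=I$), yielding $\wt D^{k}=\mathcal P D^{k}\mathcal P^\intercal$, $\wt R=\mathcal P R$, and $\wt L^\intercal=\mathcal P D^{-\intercal}L^\intercal$ via $L=\Phi^\intercal[A,\;\Gamma]\mathcal P$, is exactly the mechanical verification the paper's subsequent theorem tacitly relies on, and your index bookkeeping $(\wt D^\intercal)^{k}\wt L^\intercal=\mathcal P (D^\intercal)^{k-1}L^\intercal$ for $k\ge 1$ correctly reproduces, generator by generator, the paper's definition of $\mathcal K_n(D^\intercal, D^{-\intercal}L^\intercal)$ as the span of $\{D^{-\intercal}L^\intercal, L^\intercal,\dots,(D^\intercal)^{n-2}L^\intercal\}$.
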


{\brown With these observations, we show that:} 

\begin{theorem}
The Lanczos algorithm, the Galerkin projection, and  the sampling of the noise, can be done without $\Psi$.
\end{theorem}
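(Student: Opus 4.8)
The plan is to exploit the isometric block embedding $\Pi=\left[\begin{smallmatrix}\Psi & 0\\ 0 & \Psi\end{smallmatrix}\right]$, which satisfies $\Pi^\intercal\Pi=I$ since $\Psi^\intercal\Psi=I$. The three objects that drive the entire construction factor through $\Pi$ as $\wt D=\Pi D\Pi^\intercal$, $\wt R=\Pi R$, and $L=\Phi^\intercal[A,\Gamma]\Pi$ (so that the starting test vector obeys $\Pi D^{-\intercal}L^\intercal=\wt L^\intercal$, matching the preceding Lemma). First I would record that $\wt D$, $\wt R$, and $\wt L$ can be assembled using only $A$, $\Gamma$, $\Phi$, $A^{-1}$ and the small inverse $(\Phi^\intercal A^{-1}\Phi)^{-1}$: every block that would otherwise require $\Psi$—namely $\Psi\Psi^\intercal$, $\Psi A_{22}^{-1}A_{21}$, and $\Psi A_{22}^{-1}\Psi^\intercal$—is rewritten through $\Phi\Phi^\intercal=I-\Psi\Psi^\intercal$ together with the identities \eqref{eq: a22a21} and \eqref{eq: pa22p} already derived. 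This establishes that $\wt D,\wt R,\wt L$ are $\Psi$-free.

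Second, I would show the block Lanczos recursion is transported exactly by $\Pi$. Because the Lemma gives $\Pi\,\mathcal K_n(D,R)=\mathcal K_n(\wt D,\wt R)$ and $\Pi\,\mathcal K_n(D^\intercal,D^{-\intercal}L^\intercal)=\mathcal K_n(\wt D^\intercal,\wt L^\intercal)$, and since $\Pi^\intercal\Pi=I$ forces $\Pi^\intercal\wt D\,\Pi=D$, every inner product in the algorithm is invariant. Concretely, $\wt W_k^\intercal\wt V_k=W_k^\intercal V_k$ and $\wt W_k^\intercal\wt D\,\wt V_k=W_k^\intercal D V_k$, so running the recursion on $(\wt D,\wt R,\wt L)$ generates $\wt V_k=\Pi V_k$, $\wt W_k=\Pi W_k$ with the same coefficient matrices $\delta_k,\alpha_k,\beta_k,\gamma_k$. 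The point needing care is the pivoted-QR step that fixes $\gamma_k,\wt\gamma_k,\delta_{k+1}$: since $\Pi$ preserves column norms and orthonormality ($(\Pi U)^\intercal(\Pi U)=U^\intercal U$), the factorization of $\Pi V_{tmp}$ selects the same permutation and yields the same triangular factor as that of $V_{tmp}$, so $\wt V_{k+1}=\Pi V_{k+1}$ and the nonuniqueness is resolved identically. In particular $\wh M=\wt W^\intercal\wt V$ and $\wh D=\wt W^\intercal\wt D\,\wt V$ are produced without $\Psi$.

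Third, I would check the remaining Galerkin data. Using $\Pi^\intercal\Pi=I$ gives $W^\intercal R=\wt W^\intercal\wt R$ and $LV=\Phi^\intercal[A,\Gamma]\wt V$, both expressible through the embedded $\Psi$-free bases. For the noise, writing $\Sigma_{\rm full}:=\Pi\Sigma\Pi^\intercal=\mathrm{diag}(0,\,2k_BT(I-\Phi\Phi^\intercal)\Gamma(I-\Phi\Phi^\intercal))$ and using $\Sigma=\Pi^\intercal\Sigma_{\rm full}\Pi$, one gets $W^\intercal\Sigma W=\wt W^\intercal\Sigma_{\rm full}\wt W$, hence $\wh\Sigma=\wh M^{-1}\wt W^\intercal\Sigma_{\rm full}\wt W\,\wh M^{-\intercal}$ is $\Psi$-free. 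Then $\wh Q$ from the Lyapunov equation \eqref{eq: cond-1} and the Cholesky factors of $\wh\Sigma$ and of $k_BT\wh Q$ are likewise $\Psi$-free, so sampling the noise $\wh f(t)$ and the initial state $z(0)$ requires no $\Psi$.

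The main obstacle I anticipate is the bookkeeping in the second step: verifying that the concrete block Lanczos of \cite{loher2006reliable}, including its specific pivoted-QR choice, genuinely commutes with $\Pi$, so that the embedded run reproduces the same small coefficient matrices rather than merely spanning the same subspaces. By contrast, the algebraic rewrites of $\wt D,\wt R,\wt L$ in the first step, and the inner-product identities of the third step, are comparatively routine once \eqref{eq: a22a21} and \eqref{eq: pa22p} are in hand.
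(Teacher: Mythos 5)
Your proposal is correct and takes essentially the same approach as the paper's proof: both eliminate $\Psi$ by rewriting $\wt D$, $\wt R$, $\wt L$ through $\Psi\Psi^\intercal=I-\Phi\Phi^\intercal$ and the identities (\ref{eq: a22a21})--(\ref{eq: pa22p}), then run the block Lanczos on the embedded Krylov subspaces of the preceding Lemma and observe $\wh M=\wt W^\intercal\wt V$, $\wh D=\wt W^\intercal\wt D\,\wt V$, $W^\intercal R=\wt W^\intercal\wt R$, $LV=\Phi^\intercal\big[A,\;\Gamma\big]\wt V$. Your two refinements are welcome but minor: checking that the pivoted-QR Lanczos recursion genuinely commutes with the isometric embedding $\Pi=\left[\begin{smallmatrix}\Psi&0\\0&\Psi\end{smallmatrix}\right]$ (so the small coefficient matrices coincide, not merely the subspaces) makes explicit a step the paper leaves implicit, and sampling the noise from $\wh\Sigma=\wh M^{-1}\wt W^\intercal\Sigma_{\rm full}\wt W\,\wh M^{-\intercal}$ is an equivalent alternative to the paper's direct projection identity $W^\intercal g=\wt W^\intercal\left[\begin{smallmatrix}0\\ f\end{smallmatrix}\right]$.
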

\begin{proof}

First it can be directly shown that, 
 $$
 \wt D = \left[ \begin{array}{cc} 0 & \Psi \Psi^\intercal\\
 -\Psi \Psi^\intercal A \Psi \Psi^\intercal & -\Psi \Psi^{\intercal} \Gamma \Psi\Psi^{\intercal}\end{array}\right].
 $$
Thanks again to the identity 
\begin{equation}\label{eq: pp}
 \Psi \Psi^\intercal =  I - \Phi \Phi^\intercal,
\end{equation}
 we  can evaluate $\Psi \Psi^\intercal$ through the matrix $\Phi$. Therefore the calculation of $\wt D$ can be done without $\Psi$.
 
Secondly, to compute $\wt L$, we notice that the terms involving $\Psi$ are $\Psi A_{22}^{-1}\Gamma_{22}\Psi^{\intercal}$, $\Psi\Psi^\intercal$, and $\Psi A_{22}^{-1}\Psi^\intercal$, and these terms can be represented without $\Psi$ from Equation (\ref{eq: pa22p}) and (\ref{eq: pp}).  The calculation of $\wt R$ is similar.
 
 Thirdly, we see that the solution of the projected dynamics \eqref{eq: projD} enters the coarse-grained dynamics \eqref{eq: Krylov} via a matrix multiplication by $LV.$
It is straightforward to write $L$ as $$L=\Phi^\intercal \big[ A, \;\Gamma\big]\left[\begin{array}{cc}\Psi&0\\0&\Psi\end{array}\right],$$ which means that for the term $LV$, we can actually compute  $\Phi^\intercal  \big[ A, \;\Gamma\big]\wt V,$ where $\wt V$ is constructed using  using the block Lanczos from space  $\mathcal K_n(\wt D, \wt R)$.
 
 Now let $\wt V$ and 
 $\wt W$ be the basis generated from the orthogonalization of the new Krylov subspaces $\mathcal K_n(\wt D, \wt R)$ and $\mathcal K_n(\wt D^\intercal, \wt L^\intercal )$, respectively.
 Therefore, the matrices  $\wh M = W^\intercal V = \wt W^\intercal \wt V$, $\wh D = W^\intercal D V=\wt W^\intercal \wt D \wt V$  and $W^{\intercal}R=\wt W^{\intercal} \wt R$ can all be generated without introducing $\Psi$.

 Finally, it remains to show that the sampling of the noise does not have to involve $\Psi$, which is clearly true since the noise is represented as $\wh M^{-1} W^\intercal \left[\begin{array}{c}0\\\Psi^\intercal  f\end{array}\right]$, and $W^{\intercal} \Psi^\intercal = \wt W^\intercal$.

\end{proof}
 It is a trivial, but important point in practice that 
 in the numerical implementation, it is not necessary to store the full matrix $\Phi\Phi^\intercal$. For a given vector $u$, the multiplication $\Phi \Phi^\intercal u$ can be done through $\Phi (\Phi^\intercal u)$.

{\blue
\subsection{A Summary of the Galerkin Projection}
The Galerkin projection method can be summarized as follows,
\begin{enumerate}
\item Choose appropriate basis matrix $\Phi$. 
\item  Pick the order of approximation $n \ge 1$. Use the block-Lanczos algorithm to determine the orthogonal basis $V$ and $W$,
 for the Krylov subspaces $\mathcal K_n(D, R)$ and $\mathcal K_n(D^\intercal,D^{-\intercal} L^\intercal)$, respectively.
\item Solve the stochastic differential equations \eqref{eq: Krylov'2}, where $\wh M, \wh D, \bs {\wh f}$ are defined from equations (\ref{eq: MD}) and (\ref{eq: whf}).
The initial variance of $z(t)$ is determine from \ref{eq: cond-1}.
\end{enumerate}
Clearly, this procedure avoided manual constructions of the reduced model. This choice of the Krylov subspaces guarantees that the FDT is satisfied (through \ref{eq: th2}), at least till the fifth order of approximation (numerical tests indicate that this is true for higher order cases). 

}
\section{Numerical Test}
We test our algorithm on the example considered in  \cite{ma2016derivation}. We simulate the dynamics of the protein Chignolin {\red (PDB id 1uao)} at temperature $T=298$ for .4 nano seconds. 
 {\red The system is set up in solvation, modeled by the generalized Born (GB) model and simulations have been conducted  in TINKER \cite{ponder2004tinker} using force field CHARMM22.}
For the surrounding bath, we considered the case $\gamma=91 ps^{-1}$ which corresponds to water solvant \cite{ponder2004tinker} and a  low friction  case $\gamma=5 ps^{-1}$. 
In the latter case,  the kernel function exhibits nontrivial behavior \cite{ma2016derivation}: it tends to be more oscillatory  compared to the former case. 
{By calculating the eigenvalues of $A$ we have identified the under-damped regime to be $\gamma < 13.4$ and the over-damped regime to $\gamma > 997.7$.  } 
 { Data are collected to compute the PCA matrix $A=k_BT\langle x, x^\intercal\rangle^{-1}$. The projection matrix are composed of RTB basis \cite{TaGaMaSa00}, since there are 10 residues in Chignolin, the dimension of the coarse-grained variables is 60. The explicit forms of the basis functions in $\Phi$ each each translational and rotational mode can be found in \cite{LiXian2014}. }
 
{
 We first  present the numerical result for $\gamma = 91$ in Figure \ref{Fig: g91}. 
 On the left panel, we showed the comparison of approximating memory function, from order two to order seven. The right panel of the figure provides the comparison of time correlation of the momentum. Both exact plots are obtained  by running the full model. The order of approximations, $n,$  is defined as the order of Krylov subspaces, which is equivalent to the order of the rational functions in the moment matching approach. Since the kernel function $\theta(t)$ is matrix-valued, we chose the sixth diagonal, $\theta_{6,6}(t)$ for the comparison, this index corresponds to the third rotational component of the first residue. We can observe the approximation is satisfactory for $n\le 5$. 
\begin{figure}[htp]
\centering
\includegraphics[scale=0.35]{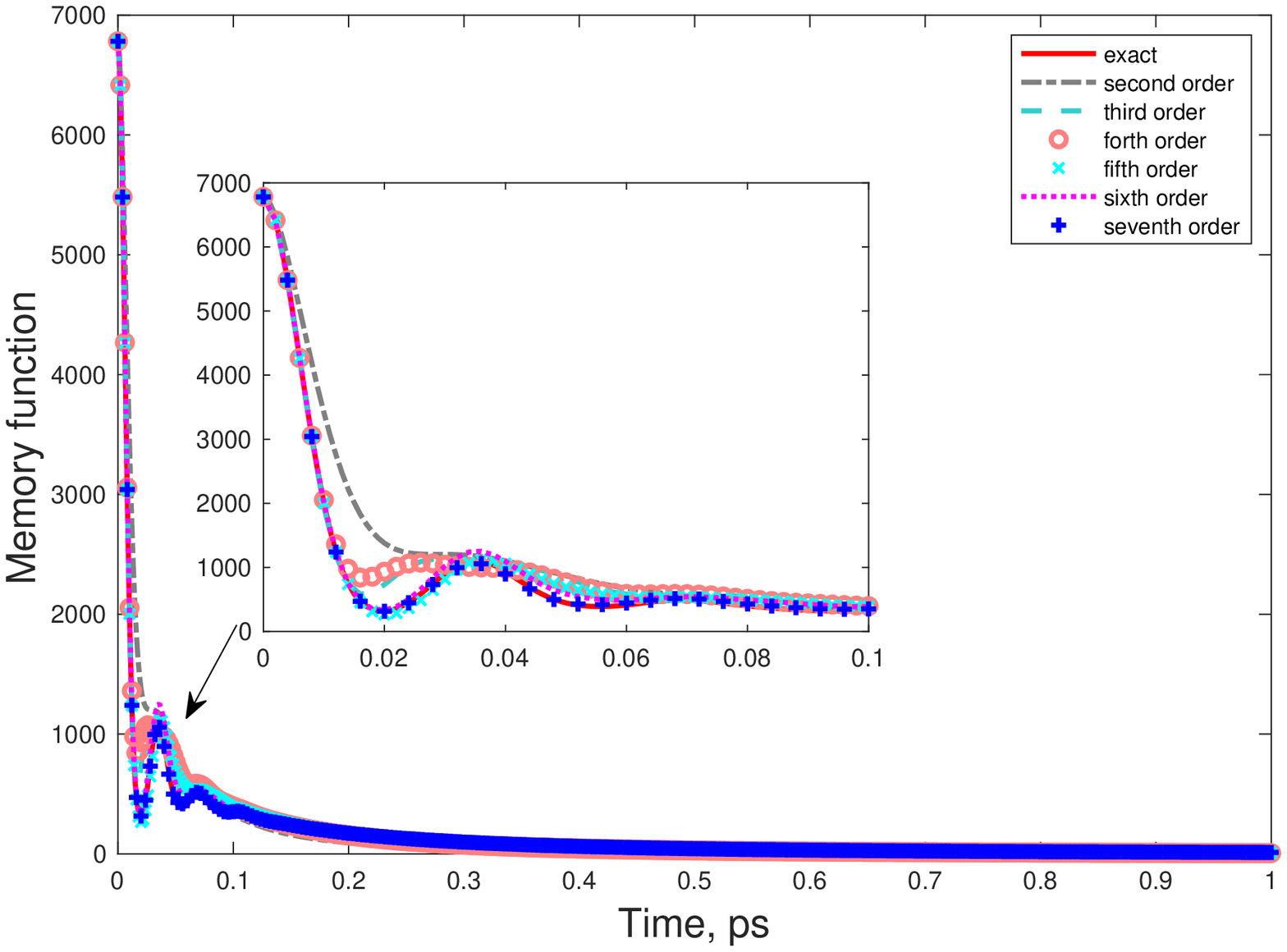}
\includegraphics[scale=0.35]{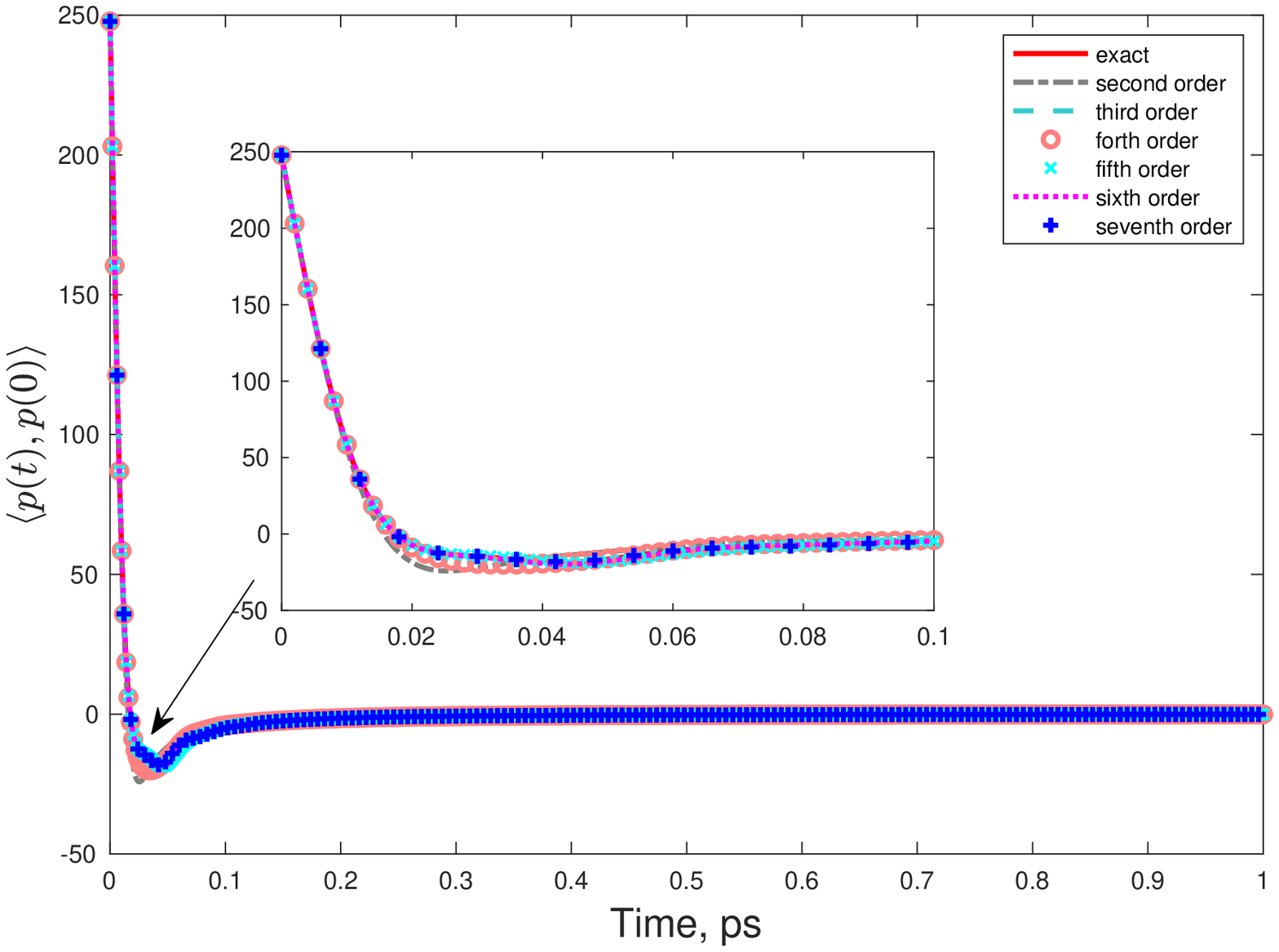}
\caption{Numerical result for $\gamma = 91$, from  second order approximation to seventh order approximation, all compared to exact solution. Left: the memory kernel function. Right: velocity auto correlation.  Both plots are for the third rotation component of the first residue.}\label{Fig: g91}
\end{figure}

In Figure \ref{Fig: g5}, we present a comparison for $\gamma = 5$. 
The small damping constant leads to a underdamped system, making  the approximation difficult due to the rapid and non-trivial oscillation. However we can observe substantial improvement of  the accuracy on the memory kernel. The memory effect on auto correlation is evident compared to system with high damping constant. 
Though improvement is significant for the memory kernel,  the velocity time correlation exhibits noticeable error. 
\begin{figure}[htp]
\includegraphics[scale=0.35]{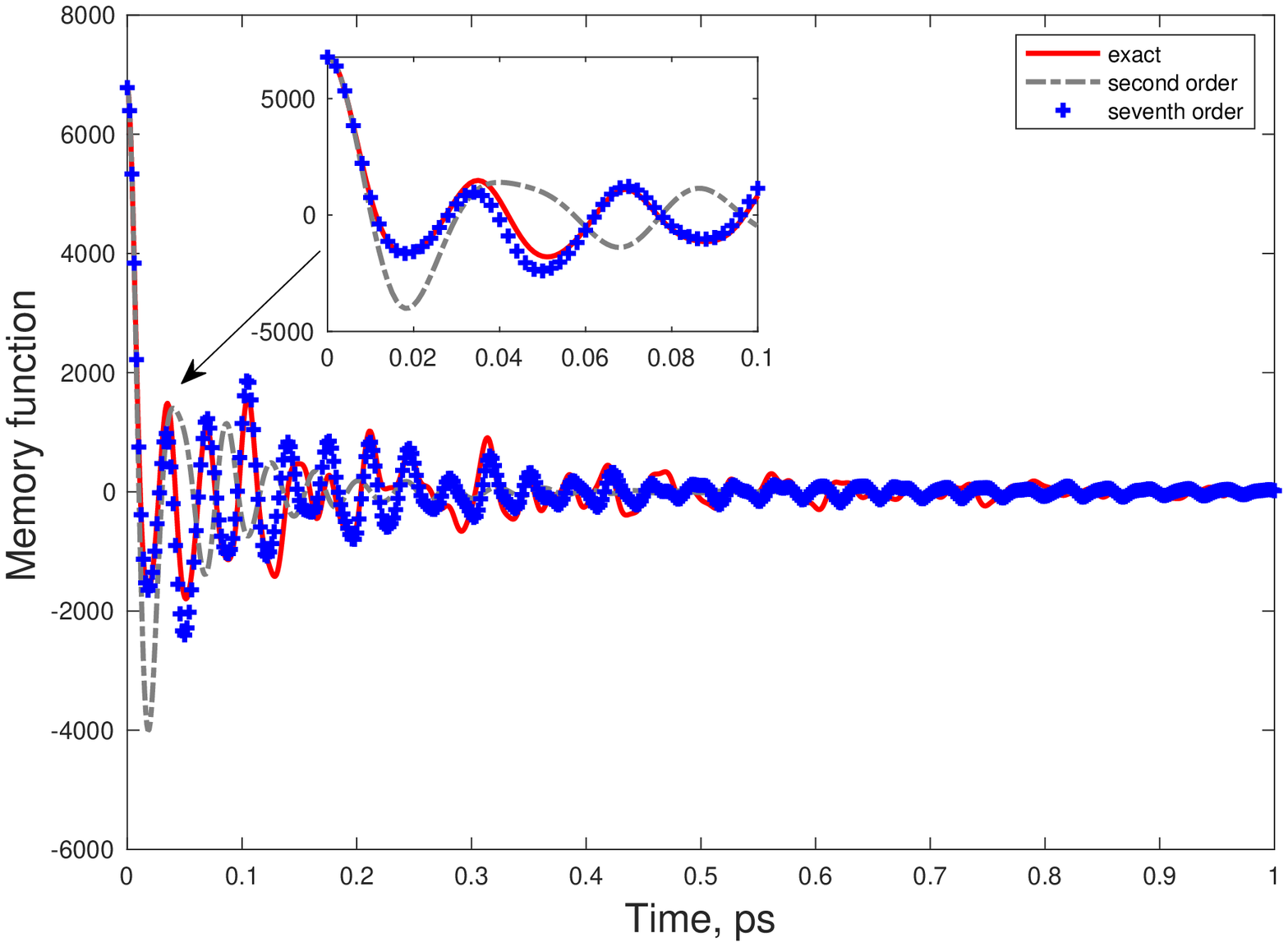}
\includegraphics[scale=0.35]{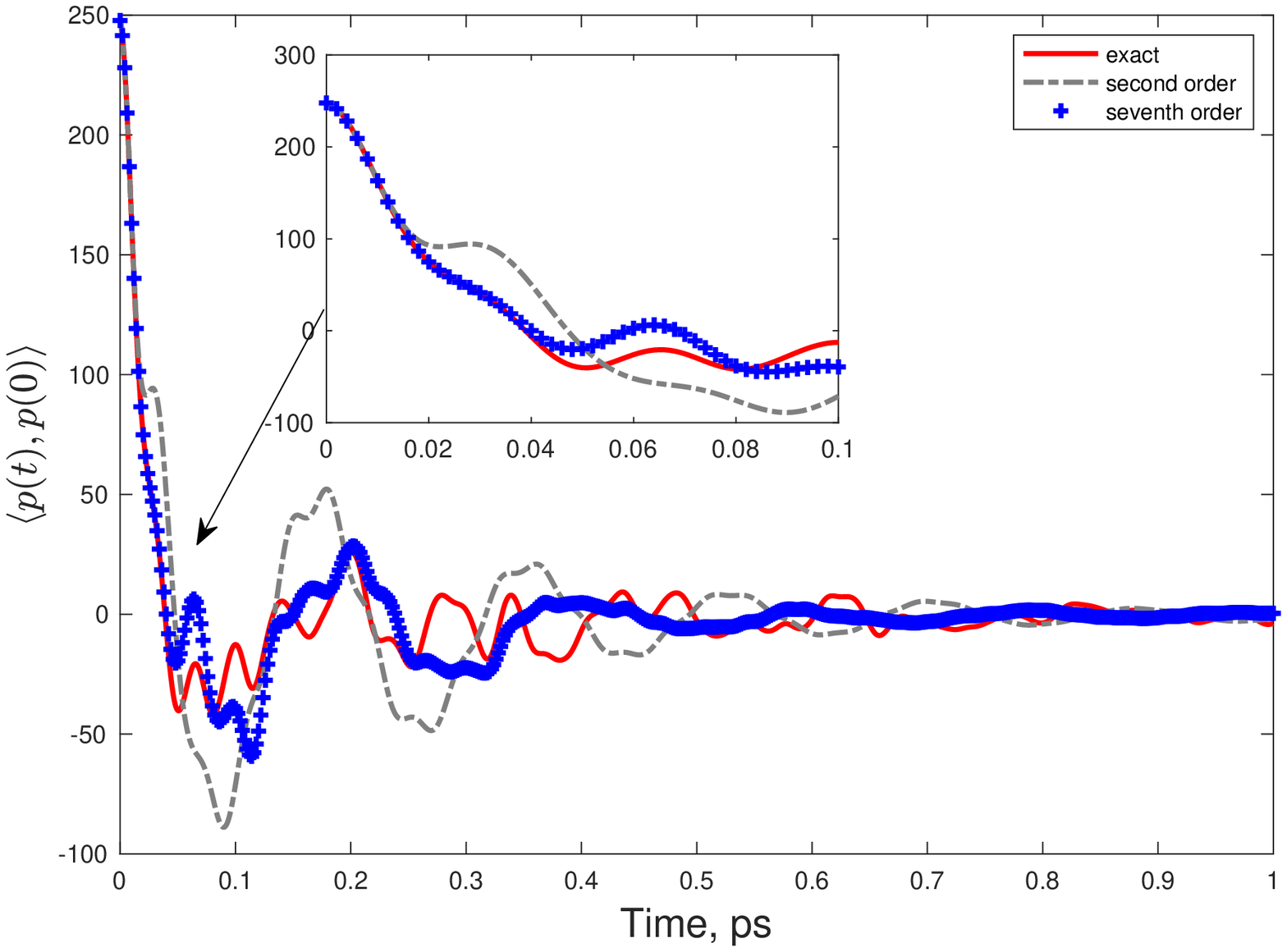}
\caption{Numerical result for $\gamma=5$. Figures show the comparison of the exact solution, second order approximation and seventh order approximation. Left: the memory kernel function. Right: velocity auto correlation. Both plots are for the third rotation component of the first residue.}\label{Fig: g5}
\end{figure}


In Figure \ref{fig: comp}, we provide a close-up view over the time interval [0,0.2] $ps$, and show results from secon order to seventh order approximations.  We observe  increased accuracy as the order of the approximation is increased within this time period. 

\begin{figure}[htp]
\includegraphics[scale=0.35]{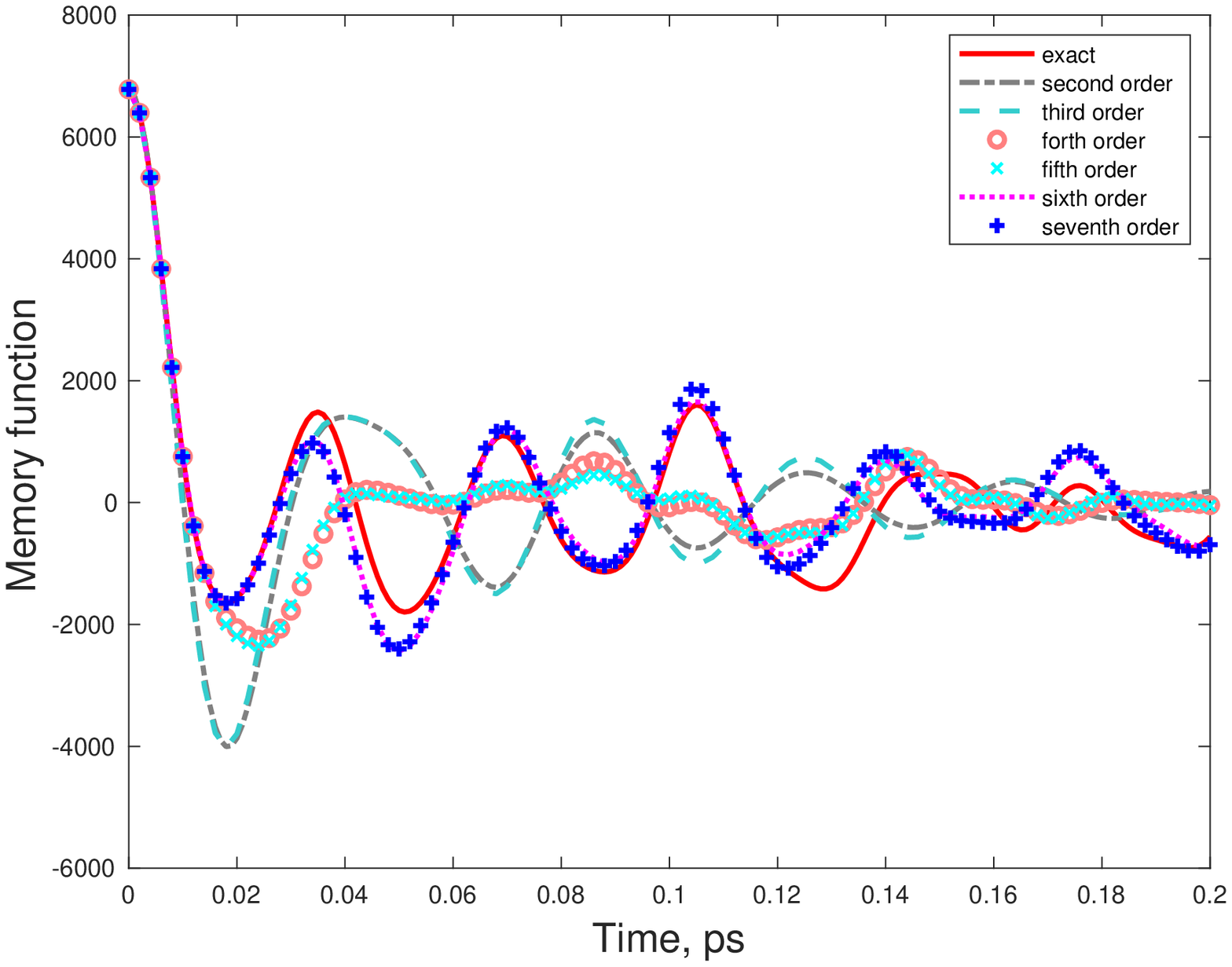}
\includegraphics[scale=0.35]{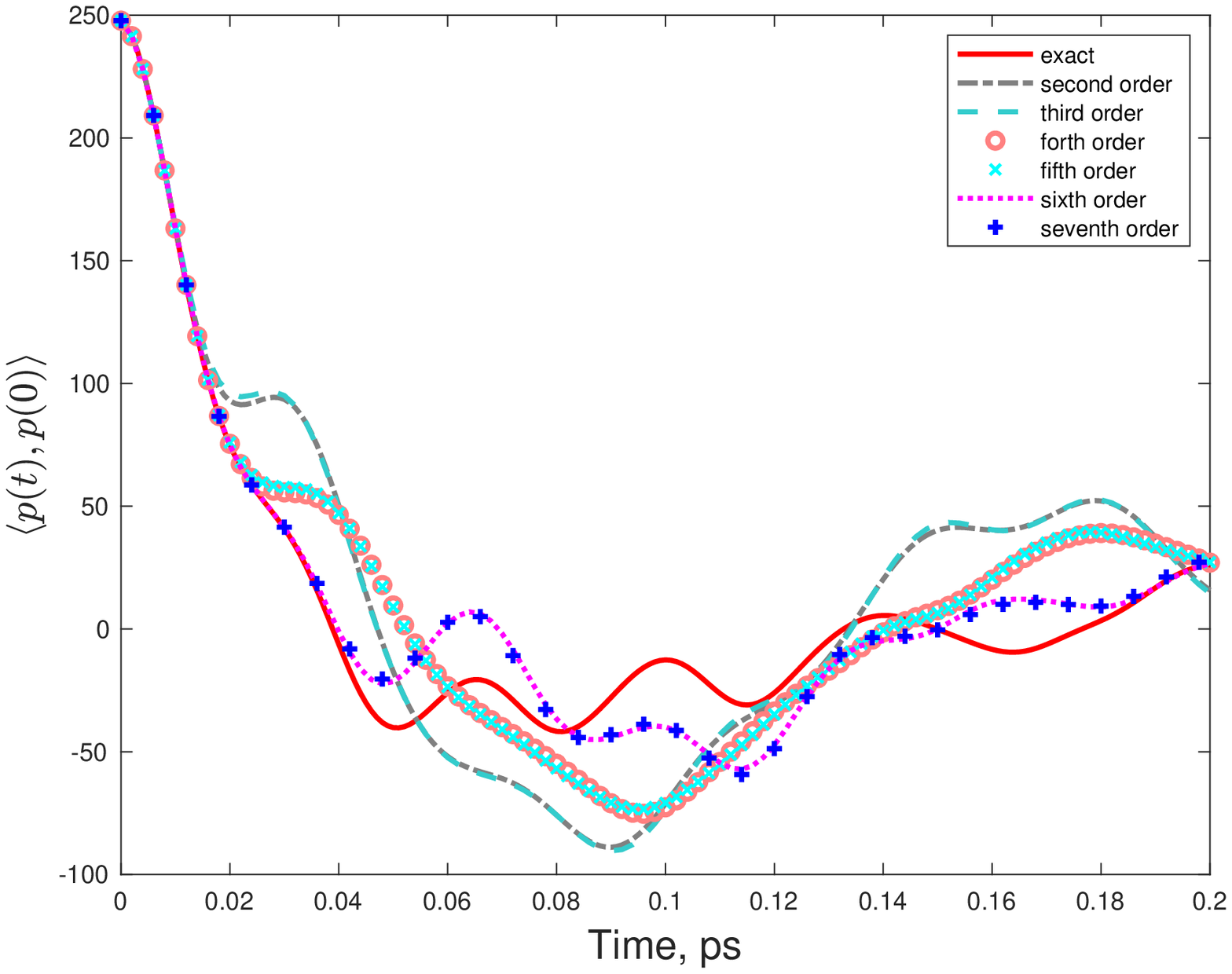}
\caption{Numerical result for $\gamma=5$. Comparison of the second order through seventh order approximations. Left: the memory kernel function. Right: velocity  auto correlation.  Both plots are for the third rotation component of the first residue.}\label{fig: comp}
\end{figure}

Finally, we present  the relative $L_2$ error for both memory kernel and time correlation, comparing the results of second order and seventh order, for both $\gamma=91$ and $\gamma=5$ in Figure \ref{fig: errL2}. This relative $L_2$ error is  computed for the time period [0,1]. We showed error for each coarse grained variables, and improvement of accuracy is significant. 
\begin{figure}[htp]
\begin{center}
\includegraphics[scale=0.38]{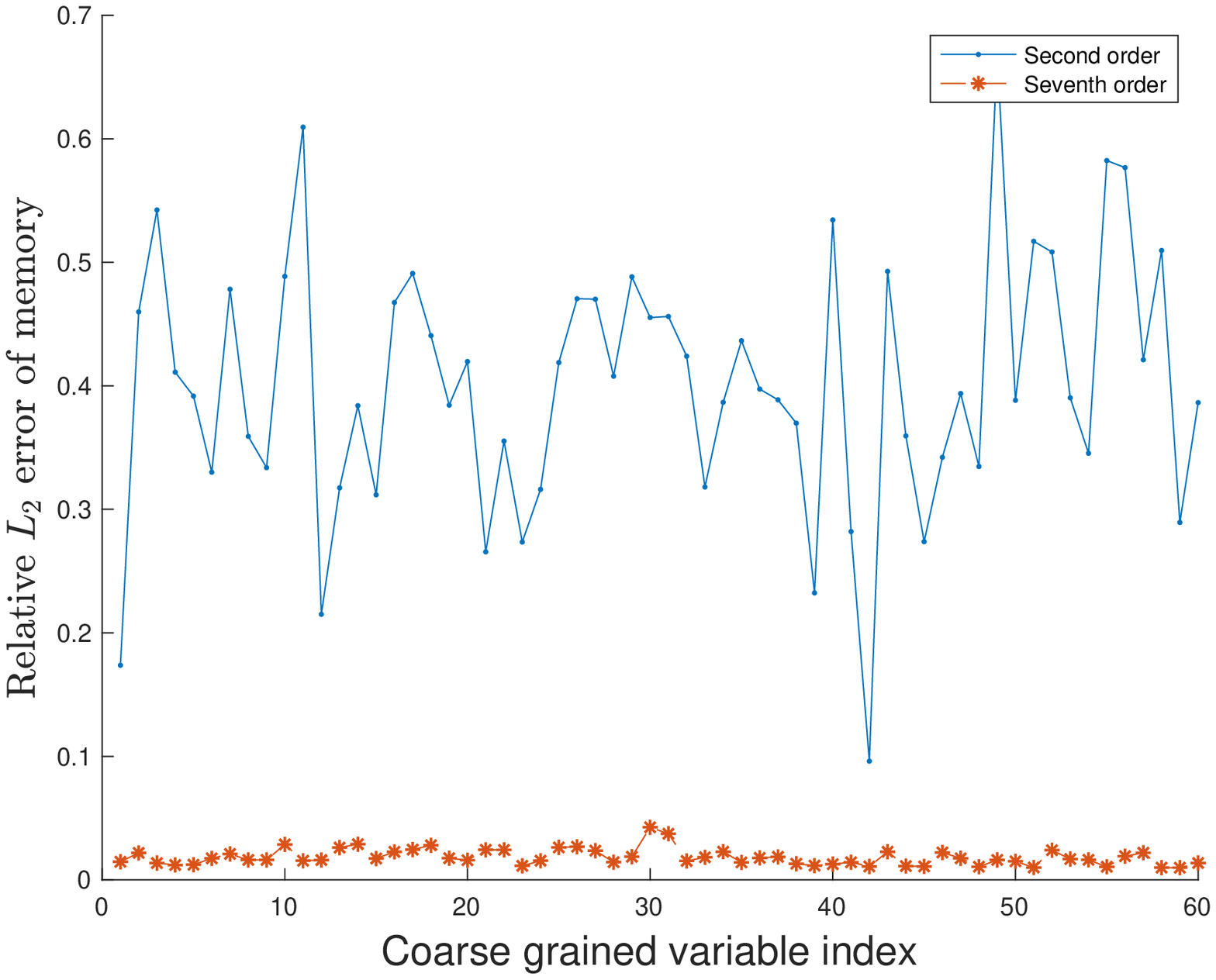}
\includegraphics[scale=0.38]{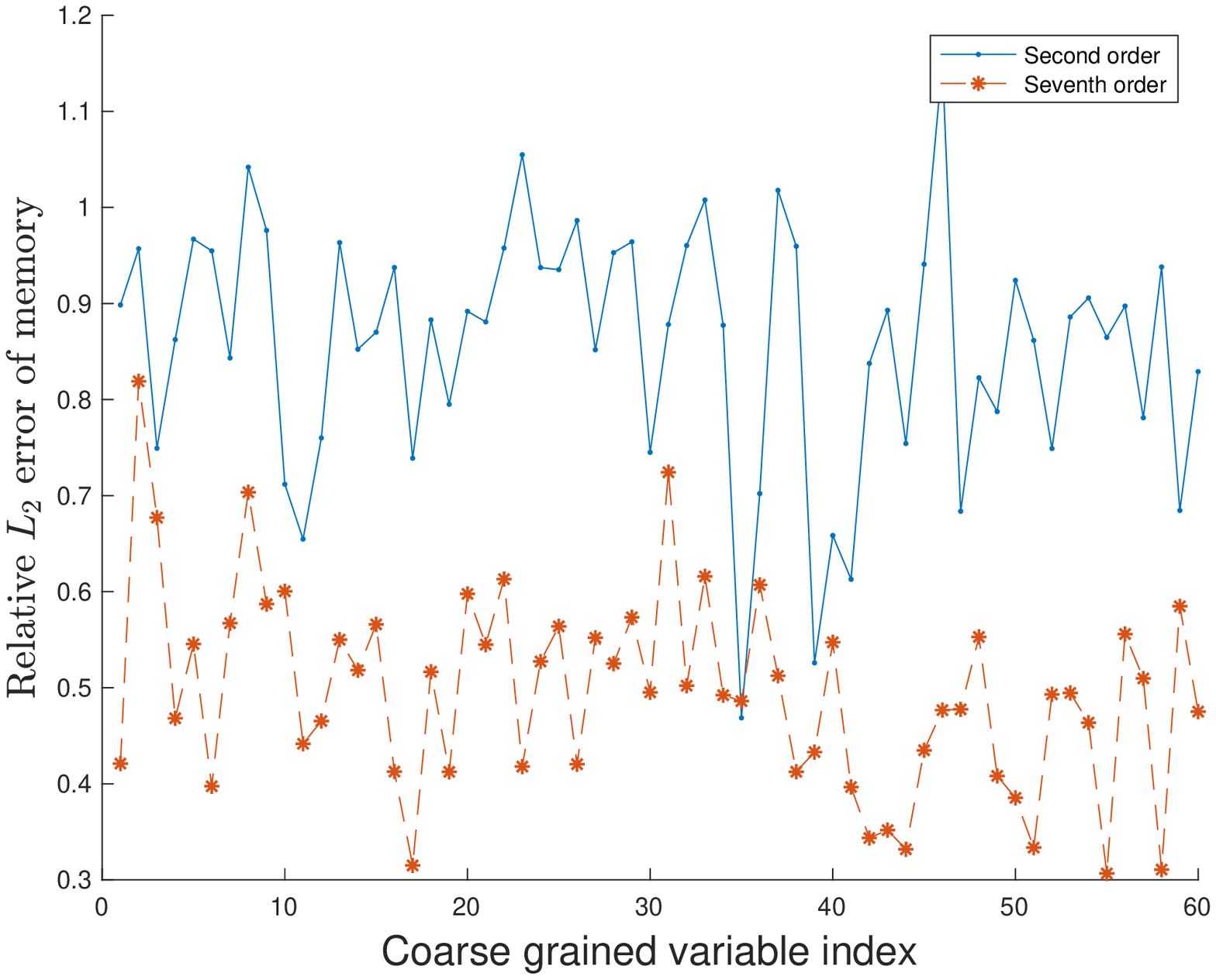}
\includegraphics[scale=0.38]{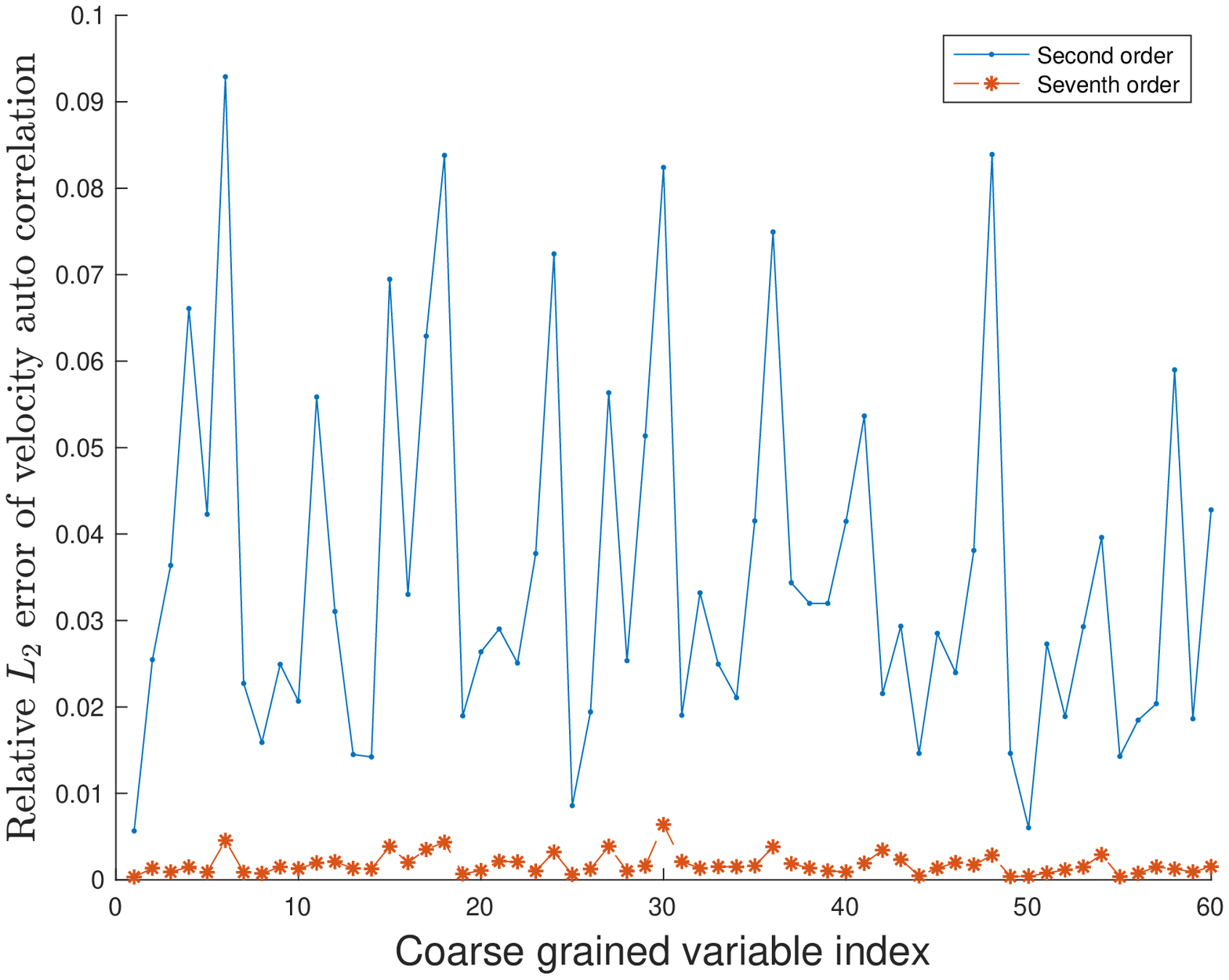}
\includegraphics[scale=0.38]{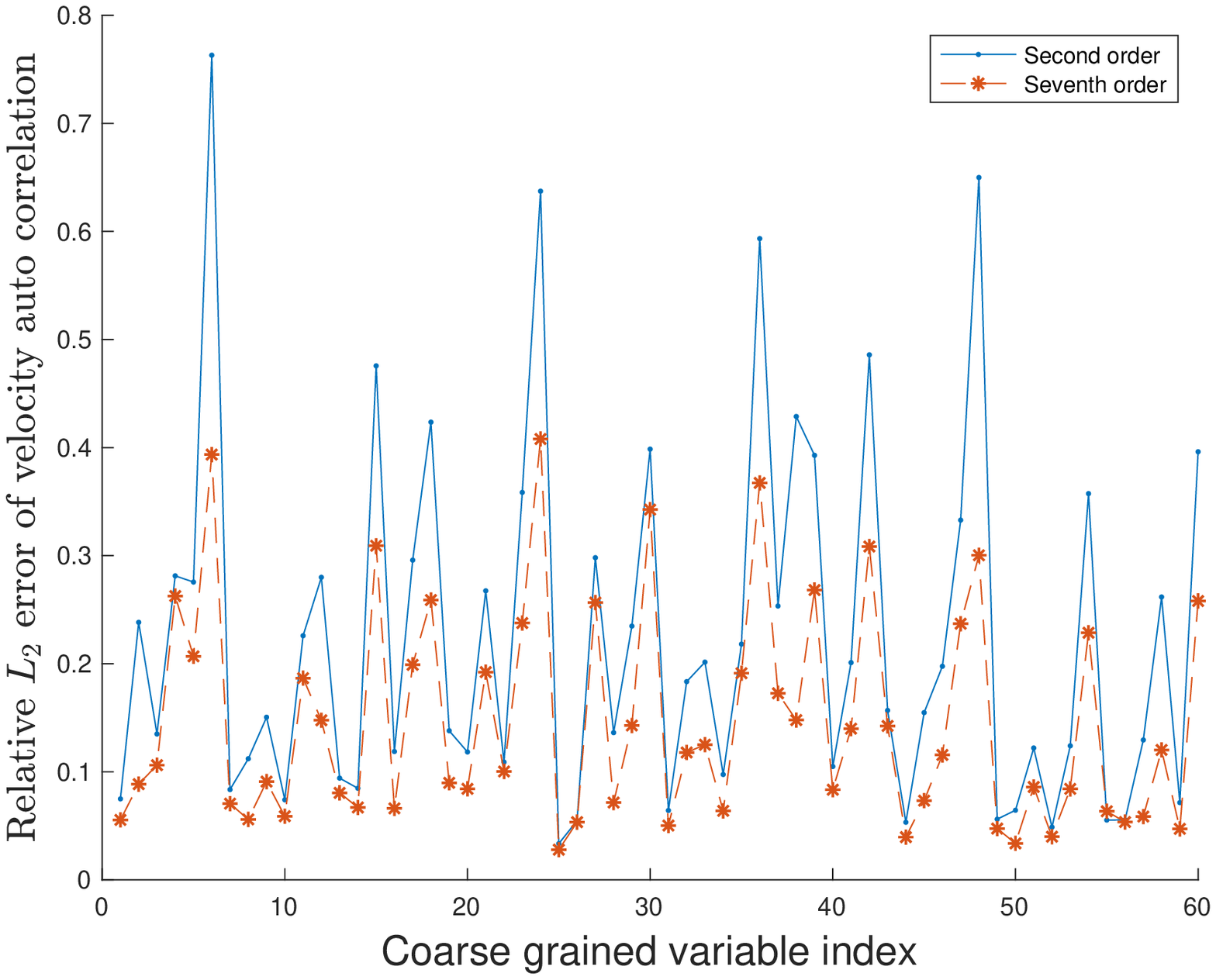}
\caption{Comparison between second and seventh order projection, using the relative $L_2$ error for each coarse grained variable. Top: the memory kernel function. Bottom: the time correlation. Left: $\gamma=91$. Right: $\gamma=5$.}\label{fig: errL2}
\end{center}
\end{figure}
}

{\blue In addition to the Krylov subspaces that were presented in the previous section, we also implemented inverse Krylov subspaces and shifted-inverse Krylov subspaces in the Galerkin projection. These variations can often offer better approximations to the transfer function in order-reduction problems \cite{bai2002krylov}, which in our case, corresponds to the memory function. However, through our numerical computations, we found that none of these choices satisfies \eqref{eq: th2}. This implies that the second FDT is not fulfilled, and the reduced dynamics \eqref{eq: Krylov'2} does {\it not} produce stationary processes \cite{Kubo66,pavliotis2014stochastic}. In fact, the variance of the solution will follow the dynamic Lyapunov equation (\cite{pavliotis2014stochastic} Eqn 3.103), and it will not converge to the steady-state.  }

\section{Conclusion}
We adopted reduced-order modeling techniques to reduce the Langevin dynamics model.  We consider reduced models obtained from Petrov-Galerkin projections. By selecting appropriate Krylov spaces,  we  show the mathematical equivalence of the proposed model to the reduced models derived from moment matching procedure. Another emphasis is placed on the statistical consistency, i.e., the fluctuation-dissipation theorem. We are able to identify two conditions that ensure such consistency. We also showed that the Galerkin projections to the selected subspaces automatically satisfy the FDT, at least for $n\le 5.$ With the block Lanczos algorithm, the models derived this way are more robust. 

One open issue is the case when the damping coefficient $\Gamma$ is not proportional to an identity matrix. In this case, both condition {\bf A} and condition {\bf B} are still sufficient to ensure the FDT. But the Krylov subspaces construction in section 4 may not satisfy condition {\bf B}. Another open question is whether one can bypass the linear approximations used in \eqref{eq: lg'} and \eqref{eq: lg'2}. It seems that a different methodology is needed to derive the generalized Langevin equation \eqref{eq: GLE}. These issues will be addressed in future works.

 \section*{Acknowledgement}
 The research of Li is supported by  NSF Grant DMS-1522617 and DMS-1619661.  The research of Liu is supported by NSF grant NSF Grant DMS-1759535 and DMS-1759536.

\appendix
\section{Recurrence Formula for $\wt {\Sigma}=W^\intercal \Sigma W$}
This is the proof of lemma 3. 
For the $n$th approximation, 
\begin{align*}
\wt \Sigma = W^\intercal \Sigma W = \left[
\begin{array}{ccccc}
LD^{-1}\Sigma D^{-\intercal}L^\intercal & LD^{-1}\Sigma L^\intercal & LD^{-1}\Sigma D^{\intercal} L^\intercal & \dots&LD^{-1}\Sigma (D^{n-2})^\intercal L^\intercal\\
L\Sigma D^{-\intercal} L^\intercal &L\Sigma L^\intercal&\dots&&L\Sigma(D^{n-2})^\intercal L^\intercal\\
\dots\\
LD^{n-2} \Sigma D^{-T} L^\intercal&\dots&&&LD^{n-2} \Sigma (D^{n-2})^\intercal L^\intercal
\end{array}
\right]
\end{align*}

The block element of $\wt {\Sigma}$ on the $i$th row and $j$th column is given by  
$$
\wt {\Sigma}_{ij} = LD^{i-2} \Sigma (D^\intercal)^{j-2} L^\intercal.
$$
By using equation (\ref{eq: Lya}), we arrive at,
$$
\wt {\Sigma}_{ij} = -k_BT [LD^{i-1} Q (D^\intercal)^{j-2} L^\intercal+ LD^{i-2}Q(D^\intercal)^{j-1} L^\intercal].
$$

Next we define matrix $S=\left[\begin{array}{cc}A_{22}^{-1} &0\\0&-I\end{array}\right]$. It can be easily seen that since $\Gamma=\gamma I$, we have $$SD^\intercal = DS, \quad SL^\intercal=R, \quad Q-S=\frac{1}{2\gamma k_BT}\Sigma.$$

With these identities, we are able to manipulate terms, and get,
\begin{align*}
\wt {\Sigma}_{ij} &= -k_BT LD^{i-1} S (D^\intercal)^{j-2}L^\intercal - k_BT LD^{i-2} S (D^\intercal)^{j-1}L^\intercal -  \frac{1}{\gamma} LD^{i-1} \Sigma (D^\intercal)^{j-2}L^\intercal- \frac{1}{\gamma}LD^{i-1} \Sigma (D^\intercal)^{j-2}L^\intercal\\
&= -2 k_BT LD^{i+j-3} R - \frac{1}{\gamma}\wt {\Sigma}_{i+1,j} - \frac{1}{\gamma}\wt {\Sigma}_{i,j+1} =  - \frac{1}{\gamma} \wt  \Sigma_{i+1,j} - \frac {1}{\gamma} \wt  \Sigma_{i,j+1} - 2 k_BTM_{i+j-3},
\end{align*}
where we have used the notation $M_{-1}=-M_\infty$.

Meanwhile, the block elements of $\wt \Sigma$ in the second column and the second row are all zeros, since by direct calculation, $\Sigma L^\intercal=0$.  Furthermore, we have $\wt  \Sigma_{11}=2k_BTM_{\infty}$. For example when $j=1$, we have
$$
\wt  \Sigma_{i1} = -\gamma \wt \Sigma_{i-1,j}-2k_BT M_{i-3} \quad, i\geq 3.
$$
Therefore we are able to write out entries of $\wt \Sigma$ using $M_i$s. 

\section{Representation of $M_i$s}
This is the proof of lemma 4.
This calculation is based on the formulas in 
\eqref{eq: mem} and \eqref{eq: Th} for the memory function $\theta(t).$ The Laplace transform will be given by,
\begin{equation}
  \Theta = L \big[ \lambda^{-1} I - D\big] R = 
  \lambda L \left[
  \begin{array}{cc}
    I & -\lambda I \\
    \lambda A_{22} & (1+\lambda \gamma)I
    \end{array} \right]^{-1}  R = \lambda L 
    \left[
  \begin{array}{cc}
    ( I + \lambda^2 (1+\lambda \gamma)^{-1}A_{22})^{-1} & * \\
   * & *
    \end{array} \right] R. 
\end{equation}
Here we have used a block inversion formula, and the fact that the second block in both $L$ and $R$ are zero. 

At this point, we can invoke the Neumann series of the first diagonal block and we have,
{
\begin{equation}
 \Theta = \lambda A_{12}A^{-1}_{22}A_{21} -  \lambda^3 A_{12}  (1 -\lambda \gamma + \lambda^2\gamma^2 + \cdots) A_{21} + 
  -  \lambda^5 A_{12}  (1 -\lambda \gamma + \lambda^2\gamma^2 + \cdots)^2 A_{22}A_{21} +
 \cdots.
\end{equation}
}
Therefore, the patterns in the representation of $M_i$'s can be observed.

As an example, the first few moments are listed below
\begin{align*}
&M_{\infty}=\gamma A_{12}(A_{22}^{-1})^2A_{21}, M_0=A_{12}A^{-1}_{22}A_{21}, \quad M_1=0, \\
&M_2=-A_{12}A_{21},\quad M_3=\gamma A_{12}A_{21},\quad M_4=A_{12}A_{22}A_{21}-\gamma^2 A_{12}A_{21},\\
&M_{5}=-2\gamma A_{12}A_{22}A_{21}+\gamma^3A_{12}A_{21},\quad
M_{6} = -A_{12}A_{22}A_{21}+3\gamma^2 A_{12}A_{22}A_{21}-\gamma^4 A_{12}A_{21},\\
&M_7 = 3\gamma A_{12}A_{22}^2A_{21} - 4\gamma^3 A_{12}A_{22}A_{21} + \gamma^5 A_{12}A_{21}.
\end{align*}

In addition, here are a few identities that is used to prove second FDT for order $n=3, 4, 5$. 
$$
\gamma M_2+M_3=0,
\quad 
\gamma^2 M_3 + 2 \gamma M_4 +M_5 = 0,\quad
\gamma^3 M_4 +3 \gamma^2 M_5 +3\gamma M_6 + M_7 =0.
$$

\bibliographystyle{unsrt}
\bibliography{mem0,GLERef,GLERef1,reduction,add-ref}

 \end{document}